\newtheorem{theorem}{Theorem}
\newtheorem{proposition}{Proposition}
\newtheorem{claim}{Claim}
\newtheorem{lemma}{Lemma}
\newtheorem{corollary}{Corollary}
\newcommand{\Z}{\mathbb{Z}}
\newcommand{\R}{\mathbb{R}}
\begin{document}
\title{Finding Certain Arithmetic Progressions in 2-Coloured Cyclic Groups}
\author{Matei Mandache\footnote{Mathematical Institute, University of Oxford. E-mail: \href{mailto:matei.mandache@maths.ox.ac.uk}{matei.mandache@maths.ox.ac.uk}
}}
\maketitle
\begin{abstract}
We say a pair of integers $(a, b)$ is findable if the following is true. For any $\delta > 0$ there exists a $p_0$ such that for any  prime $p \ge p_0$ and any red-blue colouring of $\Z /p\Z$ in which each colour has density at least $\delta$, we can find an arithmetic progression of length $a+b$ inside $\Z/p\Z$ whose first $a$ elements are red and whose last $b$ elements are blue.

Szemer\' edi's Theorem on arithmetic progressions implies that $(0,k)$ and $(1,k)$ are findable for any $k$. We prove that $(2, k)$ is also findable for any $k$. However, the same is not true of $(3, k)$. Indeed, we give a construction showing that $(3, 30000)$ is not findable. We also show that $(14, 14)$ is not findable.
\end{abstract}
\section{Introduction}
In 1975, Szemer\'{e}di \cite{szemeredi75} proved the following famous theorem about arithmetic progressions:

\begin{theorem}\label{szap}
Let $k$ be an integer and let $\delta > 0$. There exists an $N_0 = N_0(k, \delta)$ such that if $N \ge N_0$ and $A \subset [N]$ with $|A| \ge \delta N$ then $A$ must contain a non-trivial arithmetic progression of length $k$.
\end{theorem}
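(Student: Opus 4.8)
The plan is to prove Theorem~\ref{szap} by a \emph{density-increment} argument, following Roth's Fourier-analytic proof in the case $k=3$ and its higher-order generalisation due to Gowers for general $k$. First I would pass to a cyclic group: picking a prime $N'\in[2kN,4kN]$ (Bertrand) and embedding $[N]$ into the initial segment $\{1,\dots,\lfloor N'/(2k)\rfloor\}$ of $\Z/N'\Z$, one checks that every $k$-term progression of $\Z/N'\Z$ contained in this initial segment is a genuine progression of integers — it is too short to wrap around — while $A\subseteq[N]$ with $|A|\ge\delta N$ acquires density $\ge\delta/(4k)$ in $\Z/N'\Z$. Rescaling constants, it therefore suffices to prove the cyclic version: for every $\delta>0$ and integer $k$ there is an $N_0$ so that for all primes $N\ge N_0$, every $A\subseteq\Z/N\Z$ with $|A|\ge\delta N$ contains a non-trivial $k$-term progression modulo $N$ — indeed, the argument below yields the stronger Varnavides-type bound of $\gg_{\delta,k}N^2$ of them.

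Everything hinges on a dichotomy measured by the Gowers uniformity norm $\|\cdot\|_{U^{k-1}}$. Writing $f=1_A-\delta$ for the balanced function of $A$, an iterated application of the Cauchy--Schwarz inequality — the \emph{generalised von Neumann inequality} — shows that the normalised count $\mathbb{E}_{x,r\in\Z/N\Z}\!\left[\prod_{j=0}^{k-1}1_A(x+jr)\right]$ differs from its ``expected'' value $\delta^k$ by at most $O_k\!\left(\|f\|_{U^{k-1}}\right)$. So if $\|f\|_{U^{k-1}}$ lies below a threshold $\eta(\delta,k)=\delta^k/2^{k+1}$, this count is at least $\tfrac12\delta^k$, giving $\gg N^2$ progressions, of which only the $N$ with $r=0$ are trivial; we are then done. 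Assume therefore $\|f\|_{U^{k-1}}>\eta$.

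The heart of the matter — and the step I expect to be the main obstacle — is the \emph{inverse theorem}, turning largeness of $\|f\|_{U^{k-1}}$ into structure a density increment can exploit. For $k=3$ this is elementary Fourier analysis: $\|f\|_{U^2}$ is, up to normalisation, the $\ell^4$-norm of $\widehat f$, so since $\widehat f(0)=0$ some non-trivial character $x\mapsto\exp(2\pi i\xi x/N)$ has $|\widehat f(\xi)|\gg_\delta1$; chopping $\Z/N\Z$ into $\asymp\sqrt N$ arithmetic progressions on each of which this character is essentially constant and averaging produces a progression $P$, $|P|\gg_\delta\sqrt N$, on which $A$ has density at least $\delta+c(\delta)$. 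For $k\ge4$ the analogue is the (local) \emph{Gowers inverse theorem}: $\|f\|_{U^{k-1}}>\eta$ forces $f$ to correlate, on a long sub-progression, with a phase $\exp(2\pi i\,\phi(x)/N)$ for a polynomial $\phi$ of degree $k-2$; a Weyl-type equidistribution estimate, together with pigeonholing over a partition into short progressions, again produces a progression $P$ of length $\gg_k N^{c(k)}$ on which $A$ has density at least $\delta+c(\delta,k)$. Establishing this inverse theorem with effective bounds is the substantial technical content of the whole argument. Alternatively one can avoid the analytic machinery altogether via Furstenberg's ergodic-theoretic route — the correspondence principle reduces Theorem~\ref{szap} to the multiple recurrence statement $\liminf_{M\to\infty}\frac1M\sum_{n=1}^M\mu\!\left(A\cap T^{-n}A\cap\cdots\cap T^{-(k-1)n}A\right)>0$ for measure-preserving systems, proved by the structure theory of such systems (a transfinite tower of compact and weakly mixing extensions) — at the price of no explicit value for $N_0$.

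Finally I would close the loop by iteration. The progression $P$ is genuine, so an affine rescaling identifies it with $[N_1]$, where $N_1\gg_k N^{c(k)}$, and turns $A\cap P$ into a subset of $[N_1]$ of density at least $\delta+c(\delta,k)$; the same argument now applies to it, and (since the thresholds above only get easier as the density rises) the constants may be frozen at their values for the starting density $\delta$. A density cannot exceed $1$, so the increment step can be performed at most $O_{\delta,k}(1)$ times; since each performance replaces the length by something $\gg_k N^{c}$ with $c=c(k)\in(0,1)$ and then renormalises, there is an explicit $N_0(\delta,k)$ below which the whole chain of increments cannot be carried out at all. Hence for $N\ge N_0$ the uniformity alternative of the second paragraph must have triggered at some stage, yielding $\gg N^2$ — in particular at least one non-trivial — $k$-term progression in $A$. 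This completes the plan.
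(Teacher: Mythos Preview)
The paper does not prove Theorem~\ref{szap} at all: it is stated as Szemer\'edi's 1975 theorem and simply cited (\cite{szemeredi75}), then used as a black box throughout Section~\ref{2k}. So there is no ``paper's own proof'' to compare against.

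Your outline is a faithful sketch of the Gowers density-increment strategy and is, at that level of detail, correct. You rightly flag the local $U^{k-1}$ inverse theorem as the crux; without actually proving it (or the Furstenberg multiple-recurrence alternative you mention) the write-up remains a plan rather than a proof, but that is appropriate here since the theorem is quoted rather than established in the paper. One small wrinkle worth tightening if you ever flesh this out: in the embedding step you want progressions in $\Z/N'\Z$ with all $k$ terms in the initial segment to be genuine, which requires the common difference to also be ``small'' --- the usual fix is to restrict the count to $r$ in a short range, or to embed into the middle third, rather than relying solely on the terms lying in the first $1/(2k)$ fraction.
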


Where we use the notation $[N]$ to denote the set $\{ 1, 2, \dots, N \}$.
Several generalisations of Thoerem~\ref{szap} have since been proven (see, for example, \cite{polynomialszemeredi} and \cite{multidimszemeredi}). In the current paper, we consider another way of trying to generalise this. If we consider the set $[N]$ as being coloured red and blue, with the set $A$ being red and $[N] \setminus A$ being blue, then Theorem~\ref{szap} tells us we can find red arithmetic progressions (under suitable size conditions). But what about arithmetic progressions which mix the two colours? For example, what if we want to find an arithmetic progression of length 4 which is coloured $( \text{red}, \text{red}, \text{blue}, \text{blue})$?

When considering this question, it is natural to work in cyclic groups modulo a prime, rather than in the set $[N]$, or in arbitrary cyclic groups. This is because in the latter two cases we may have some simple obstructions to finding such patterns. For example, one could colour all the even numbers red and all the odd numbers blue. These obstructions do not happen in $\Z/p\Z$, making the question more interesting.

For any integers $a, b \ge 0$, we define an $(a, b)$-arithmetic progression, or $(a, b)$-AP for short, to be an arithmetic progression of length $a+b$ with the first $a$ elements being red and the last $b$ elements being blue. Given a pair of non-negative integers $(a, b)$, we say that $(a, b)$ is findable if for all $\delta > 0$ there is a $p_0$ such that for any  prime $p \ge p_0$, any red-blue colouring of $\Z /p\Z$ with each colour class having size at least $\delta p$ contains an $(a,b)$-AP with common difference $d \neq 0$.

We conclude the introduction by making some basic remarks about which pairs are findable.
Findability is symmetric and monotone, in that if $(a, b)$ is findable, then so is $(b, a)$, and $(a', b')$ for any $a' \le a$ and $b' \le b$. Theorem~\ref{szap} still holds if we replace $[N]$ with $\Z/p\Z$ (and have a size condition on $p$), so $(k, 0)$ is findable for all $k$. In fact, Theorem~\ref{szap} also implies that $(k,1)$ is findable for all $k$. Once we have found a red arithmetic progression of length $k$ in $\Z/p\Z$, say $\{a, a+d, \dots, a+ (k-1)d \}$, let $t$ be the minimal non-negative integer such that $a+td$ is blue. This must exist, because as $d$ and $p$ are coprime, $a+td$ ranges over the whole of $\Z/p\Z$, and $\Z/p\Z$ is not all red. Also, $t \ge k$. So $\{a+(t-k)d, a+(t+1-k)d, \dots, a+(t-1)d, a+td \}$ is a $(k, 1)$-AP.

This prompts several questions: Is it true that $(a,b)$ is findable for any $a$ and $b$? For which $a$ is $(a, k)$ findable for all $k$? What is the set of findable pairs $(a,b)$?
The aim of this paper is to make progress on these questions. Not all pairs are findable: in Section $\ref{14vs14}$, we show that $(14,14)$ is not findable. Our construction showing $(14,14)$ is not findable is a fractal-type construction, based on replacement rules which are iterated ad infinitum. We note that this construction has some similarities to constructions used to solve problems about pattern avoiding words, such as those in \cite{ccss2014} and \cite{dekking79}, which are based upon iterating a replacement rule. However, the construction in our case is somewhat different. $(a,k)$ is findable for all $k$ if and only if $a \le 2$: in Section~\ref{2k}, we show that $(2,k)$ is findable for all $k$, and in Section~\ref{3vs30000} we show that $(3,30000)$ is not findable. This leaves open the question of findability for some pairs, but there are only finitely many such pairs. Perhaps the most interesting open question is that of whether $(3,3)$ is findable.

\section{Finding $(2, k)$}\label{2k}
Let $k$ be a positive integer. We aim to show $(2,k)$ is findable. Let us assume that $\Z/p\Z$ has been red-blue coloured with at least $\delta p$ in each colour class and that this colouring contains no $(2,k)$-AP. We aim to reach a contradiction, provided $p$ is sufficiently large.

Colourings of $\Z/p\Z$ induce colourings of $\Z$ via the quotient map $\Z \to \Z/p\Z$. The induced colouring is periodic mod $p$, and contains no $(2, k)$-AP. For the rest of this section, unless otherwise stated, we will work with the colouring of $\Z$ thus induced. We use $[x, y]$ to denote the set $\{ n \in \Z | x \le n \le y \}$. We define a \emph{gap} to be an arithmetic progression whose smallest and largest elements are both red, but all other elements are blue.

First, a few words about the strategy of the proof. We can use Szemer\'{e}di's Theorem to find large gaps. It turns out that, once we have found a gap, we can deduce quite a few things about the rest of the colouring by considering various arithmetic progressions of length $k+2$ and using the fact that they are not $(2,k)$-APs. Roughly speaking, our strategy is to show that if $[0,d]$ is a gap, then in fact all reds must occur at multiples of $d$. By combining this with periodicity mod $p$, we reach a contradiction.  Unfortunately this only works for some values of $d$, namely when $d$ is a sufficiently large prime. However, there are other tricks we can use when $d$ is not prime, to reduce to the case when $d$ is prime. Before we can get to that, however, we need to do some preparatory work.

We start off by proving the following lemma. 
\begin{lemma}\label{finiteness}
Let $m \ge 1$ and suppose $[0,d]$ is a gap. Then the number of reds in $[0, md]$ is bounded by a function depending on $m$ and $k$ only.
\end{lemma}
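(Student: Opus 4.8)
The plan is to induct on $m$. The base case $m=1$ is immediate: by the definition of a gap, the only reds in $[0,d]$ are $0$ and $d$, so two reds suffice. For the inductive step I would assume a bound on the number of reds in $[0,(m-1)d]$ and bound the number of reds in the ``new'' block $B=((m-1)d,\,md]$; chaining the $m$ steps then gives the lemma (with a bound growing exponentially in $m$).

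A cheap observation splits the problem. Among the multiples of $d$ in $[0,md]$ there are only $m+1$ points, so at most $m+1$ reds lie at multiples of $d$; hence it suffices to bound the reds $r=qd+s$ with $1\le s\le d-1$. Since each residue class $s\not\equiv 0\pmod d$ meets $[0,md]$ in at most $m$ points, it is enough to bound the number of \emph{active} classes, i.e.\ nonzero residues mod $d$ containing a red in $[0,md]$, by a function of $m$ and $k$. A useful preliminary fact is that reds cannot cluster just above a gap endpoint: if $d+s$ is red with $0<s<d$, then the length-$(k+2)$ progression $d+s,\,d,\,d-s,\,d-2s,\dots,\,d-ks$ (common difference $-s$, first two terms $d+s$ and $d$ red) is not a $(2,k)$-AP, so one of $d-s,\dots,d-ks$ is red; but when $ks<d$ these all lie in $(0,d)$ and are therefore blue, a contradiction. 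So there is no red in the interval $\bigl(d,\,d+\tfrac{d}{k}\bigr)$, and the same argument, run from any red $a$ whose preceding red in a fixed common difference $e$ makes $[a-e,a]$ a ``gap of difference $e$'', shows reds cannot cluster immediately above $a$.

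The heart of the argument is to transfer reds in $B$ back to $[0,(m-1)d]$ with bounded multiplicity. Given a red $r=(m-1)d+s\in B$, I would run progressions of common difference $d$ downward through $r$: if $r-d$ is also red, the progression $r,\,r-d,\,r-2d,\dots,\,r-(k+1)d$ forces a red among $r-2d,\dots,r-(k+1)d$, all of which lie in $[0,(m-1)d]$ and in the class of $s$, so that class is already active in $[0,(m-1)d]$ and is counted by induction. When the downward $d$-progression starts $R,B,\dots$, I would instead use progressions of common difference $s$, $d-s$, or $r$ issuing from the gap endpoints $0$ and $d$ and from reds already located, each time invoking that a $(2,k)$-AP is forbidden, with the aim of showing that the class of such an ``isolated'' $r$ must be one of a bounded list of classes determined by the few reds sitting near the gap. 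A pigeonhole on these finitely many types then caps the number of newly active classes in $B$.

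The main obstacle I anticipate is precisely this transfer step for a red lying deep inside a block, i.e.\ with $s$ of size comparable to $d$. Such an $r$ is not obviously controlled by any short progression: a progression of common difference of order $d$ that remains inside the bounded window $[0,md]$ has only $O(m)$ terms, so the length-$(k+2)$ forbidden pattern can simply escape the window. I expect to need a careful case split according to the size of $s$ relative to $d/k,\,d/k^2,\dots$, exploiting the ``no clustering above a gap endpoint'' fact repeatedly to show that each block, read on its own, again looks like an instance of the original problem with a few extra marked reds coming from the block boundaries, so that an inner induction (on the block, or on $d$) closes the estimate.
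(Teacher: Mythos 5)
Your induction on $m$ in additive steps of size $d$ is at the wrong scale, and the obstacle you flag at the end is a genuine one, not a technicality. A red $r=(m-1)d+s$ with $s$ of order $d$ is not controlled by any of the progressions you list: the transfer step when $r-d$ is red requires roughly $m\ge k+2$ before the forced red $r-td$ lands inside the already-controlled range $[0,(m-1)d]$ rather than to the left of $0$; the case when $r-d$ is blue is left as an unresolved sketch involving several ad hoc common differences; and the residue-class count you set up is never actually capped by anything. You correctly identify that a common difference of order $d$ gives only $O(m)$ terms inside the window so the length-$(k+2)$ pattern can escape --- but that is precisely the point the argument has to get around, and your outline does not.

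The missing idea is to grow the window multiplicatively rather than additively. The paper inducts on $c$, with window $[0,((k+1)/k)^c d]$, so each step enlarges the window by a factor of exactly $(k+1)/k$. Let $r$ be the least red exceeding the previous window $[0,((k+1)/k)^{c-1}d]$, and let $s$ be any further red in the new margin $(r,((k+1)/k)^c d]$. The length-$(k+2)$ progression $s,\ r,\ r+(r-s),\ \dots,\ r+k(r-s)$ (first term $s$, common difference $r-s<0$) has $|r-s|\le ((k+1)/k)^c d-((k+1)/k)^{c-1}d=\frac{1}{k}((k+1)/k)^{c-1}d\le \frac{r}{k}$, so all $k$ of its final terms lie in $[0,r)$, hence inside the previous window; the ratio $(k+1)/k$ is chosen precisely so that this downward progression cannot escape. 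Since the first two terms $s,r$ are red, one of the last $k$ terms $r+t(r-s)$ with $t\in\{1,\dots,k\}$ must be red, and the pair $(t,\ r+t(r-s))$ determines $s$, so the new margin contributes at most $k$ times the old count of reds. That closes the recursion in about $k\log m$ steps, giving a bound of the form $3(k+1)^c-1$. No residue-class counting and no split on whether $r-d$ is red or blue is needed. Replacing your additive block step by this multiplicative one is the single fix your proposal needs.
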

\begin{proof}
Let $c$ be the least integer such that $m \le \left(\frac{k+1}{k} \right)^c$. We will show that the number of reds in $[0,md]$ is at most $3 (k+1)^c -1$ by induction on $c$.

In the base case $c=0$, we have $m=1$ and the result is trivial. For the induction step, $c \ge 1$ and we may assume that $m = \left(\frac{k+1}{k} \right)^c$, since this makes $m$ bigger without increasing $c$. We know by the induction hypothesis that there are at most $3(k+1)^{c-1}-1$ reds in the interval $\left[0, \left(\frac{k+1}{k} \right)^{c-1} d \right]$. Let $r$ be the least red with $r > \left(\frac{k+1}{k} \right)^{c-1} d$. If $r \ge \left(\frac{k+1}{k} \right)^c d$, we are done.

Otherwise, suppose $s$ is red with  $r < s \le \left(\frac{k+1}{k} \right)^c d$. We aim to show that there are at most $3k(k+1)^{c-1}-k$ such reds. Consider the arithmetic progression of length $k+2$ with first term $s$ and common difference $r-s$. The first two terms, $s$ and $r$, are both red, and so it must contain another red. We pick a $t \in \{1, 2, \dots, k \}$ such that $r + t(r-s)$ is red. Since $t(s-r) \le k \left(\left(\frac{k+1}{k} \right)^c d -  \left(\frac{k+1}{k} \right)^{c-1} d \right) = \left(\frac{k+1}{k} \right)^{c-1} d\le r$, we have that $0 \le r + t(r-s) < r$ so $r + t(r-s)$ must be one of the reds in $\left[0, \left(\frac{k+1}{k} \right)^{c-1} d \right]$. So there are $k$ possible values for $t$ and at most $3(k+1)^{c-1}-1$ possible values for $r + t(r-s)$ . Since $t$ and $r+t(r-s)$ determine $s$, this means there are at most $3k(k+1)^{c-1}-k$ possible values for $s$.

We now do the final sum, by adding the numbers of reds in the interval which are less than $r$, equal to $r$ and greater than $r$. The total is at most $3(k+1)^{c-1}-1 + 1 + 3k(k+1)^{c-1}-k = 3(k+1)^c - k \le 3(k+1)^c-1$ as required.
\end{proof}
Lemma~\ref{finiteness} has the following corollary.
\begin{corollary}\label{gaps}
Let $c$ be a positive real. Then there exists an $N = N(c, k) \ge 1$ such that the following holds:
if $[m_1, m_2]$ and $[n_1, n_2]$ are gaps with $n_2 - n_1 \ge N(m_2-m_1)$, then the distance between the gaps must be at least $c(n_2-n_1)$
\end{corollary}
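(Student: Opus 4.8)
The plan is to argue that the gaps lying strictly between the short gap $[m_1,m_2]$ and the long gap $[n_1,n_2]$ form a chain of consecutive gaps, and that Lemma~\ref{finiteness}, applied at the scale of the \emph{long} gap, forces this chain to have only boundedly many links; since consecutive gaps must have comparable length, such a short chain cannot bridge a length of order $d_2$ down to a length of order $d_1\le d_2/N$ once $N$ is large. A preliminary fact I will use is that two consecutive gaps have lengths within a factor $k$ of one another. Indeed, if $[x,y]$ is a gap and $\ell=y-x$, then in the length-$(k+2)$ progression $x,\,y,\,x+2\ell,\,\dots,\,x+(k+1)\ell$ the first two terms are red, so one of the last $k$ is red (else this would be a $(2,k)$-AP); as these all exceed $y$, the least red above $y$ is at most $y+k\ell$, and, running the argument with reversed common difference, the greatest red below $x$ is at least $x-k\ell$. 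Hence if $[x,y]$ and $[y,z]$ are consecutive gaps then $z-y\le k(y-x)$ (as $z$ is the least red above $y$) and $y-x\le k(z-y)$ (as $x$ is the greatest red below $y$).

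Now let $F(m,k)$ denote the bound supplied by Lemma~\ref{finiteness}, put $m_0=\lceil c\rceil+2$, and take $N=N(c,k):=k^{F(m_0,k)}+1$. Suppose $[m_1,m_2]$ and $[n_1,n_2]$ are gaps, write $d_1=m_2-m_1$ and $d_2=n_2-n_1$, and assume $d_2\ge N d_1$. First dispose of degenerate positions: since $m_1,m_2$ are red they cannot be interior to $[n_1,n_2]$, and $[m_1,m_2]$ is too short to contain $[n_1,n_2]$, so the two gaps are disjoint apart from a possible shared endpoint; and a shared endpoint would make them consecutive gaps, forcing $d_2\le k d_1$ against $N>k$. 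So the gaps are disjoint, one lying entirely to one side of the other; reflecting the colouring $t\mapsto-t$ if necessary (which preserves having no $(2,k)$-AP and all relevant distances), I may assume $n_2<m_1$. Assume for contradiction that the distance $D:=m_1-n_2$ satisfies $D<c\,d_2$.

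List the red points of $[n_1,m_2]$ in increasing order as $n_1,n_2,u_1,\dots,u_p,m_1,m_2$ (so $p\ge 0$). Consecutive reds are the endpoints of a gap, so $[n_1,n_2],[n_2,u_1],\dots,[u_p,m_1],[m_1,m_2]$ is a chain of $p+3$ consecutive gaps running from the length-$d_2$ gap to the length-$d_1$ gap. Because $m_2-n_1=d_1+D+d_2<d_2+c\,d_2+d_2\le m_0 d_2$, all of these $p+4$ red points lie in $[n_1,n_1+m_0 d_2]$, so Lemma~\ref{finiteness} applied to the gap $[n_1,n_2]$ gives $p+4\le F(m_0,k)$. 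On the other hand the chain has $p+2$ consecutive pairs, each changing the length by a factor at most $k$, so $d_2/d_1\le k^{\,p+2}$, whence $d_1\ge d_2/k^{\,p+2}\ge d_2/k^{\,F(m_0,k)-2}>d_2/N$, contradicting $d_2\ge N d_1$. This proves the corollary.

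\textbf{The main obstacle.} The delicate point is getting a bound on the number of links in the chain that depends only on $c$ and $k$. Simply following the chain and using the factor-$k$ comparison between neighbours yields only a distance bound of about $d_2/(k-1)$, which does not grow with $c$ at all. One must \emph{count} the links, and the crucial observation is that this count has to be extracted from Lemma~\ref{finiteness} applied at the scale of the \emph{long} gap — a window of width $\Theta(c)\cdot d_2$ — so that the parameter $m_0$ fed to the lemma is a constant depending on $c$ alone; applying the lemma instead at the scale of the short gap would require a parameter of size about $c\,d_2/d_1$, and the bound it returns would then be far too large to yield any contradiction.
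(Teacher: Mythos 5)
Your proof is correct and follows essentially the same route as the paper: translate so the long gap sits at the origin, invoke Lemma~\ref{finiteness} at the scale of the long gap (with a parameter of order $c$) to bound the number of reds in the window, and then telescope the factor-$k$ comparison between consecutive gaps along the chain of reds to bound $d_2/d_1$. The only differences are presentational — you argue by contradiction rather than by contrapositive, you handle the degenerate overlapping/shared-endpoint positions explicitly, and you round $c+2$ up to an integer before invoking the lemma — but the key lemma, the decomposition into a chain of consecutive gaps, and the crucial choice of scale are all the same as the paper's.
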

\begin{proof}
Without loss of generality, $m_1 \ge n_2$. By translating, we may also assume that $n_1 = 0$. We prove the contrapositive: assume that the gaps are less than $cn_2$ apart, and aim to prove that $(n_2 - n_1 )/(m_2-m_1)$ is bounded by a function of $c$ and $k$. We have $m_1 \le (c+1)n_2$ and therefore $m_2 \le (c+2)n_2$. We apply Lemma~\ref{finiteness} with $d=n_2$ and $m=c+2$ to deduce that there are at most $b=b(c, k)$ reds in the interval $[0,m_2]$. Let $r_1 \le r_2 \le \dots \le r_a$ be these reds. So we have $r_1 =0, r_2 = n_2, r_{a-1}=m_1, r_a = m_2$ and $a \le b$. For each $2 \le i \le a-1$, we must have $r_i-r_{i-1} \le k (r_{i+1} - r_i)$, since otherwise the arithmetic progression of length $k+2$ with first term $r_{i+1}$ and common difference $r_i-r_{i+1}$ would be a $(2, k)$-AP. By combining these inequalities for all $i$, we get that $r_1-r_0 \le k^{a-1} (r_a -r_{a-1})$, which means $n_2-n_1 \le k^{b-1} (m_2-m_1)$. So $(n_2 - n_1 )/(m_2-m_1) \le k^{b-1}$, as required.
\end{proof}

We use the notation $d[x, y]$ to denote the set $\{ dn| n \in [x, y] \}$. We define a \emph{table} to be a set of the form $d[-\ell , m]$ in which the following hold:
\begin{itemize}
\item $-\ell d$ and $md$ are both red
\item If $d[n_1, n_2] \subset d[-\ell ,m]$ is a gap, then $n_2-n_1 \le \frac{m+ \ell}{k+1} $
\end{itemize}
Note that $d[n_1, n_2]$ being a gap is not the same as $[dn_1, dn_2]$ being a gap.
A table is then called \emph{extendable} if, for any $\ell' \ge \ell$ and $m' \ge m$ with $-\ell'd$ and $md$ both red, $d[-\ell', m']$ is also a table.

The reason behind this definition is the following: once we have a table $d[-\ell, m]$, we can obtain information about the red points in $[-\ell d, md] \setminus d[-\ell, m]$ by repeatedly considering arithmetic progressions of length $k+2$ whose first element is a red in $d[-\ell, m]$ and whose second element is a red in $[-\ell d, md] \setminus d[-\ell, m]$. This will give us information about the reds in $[-\ell d, md] \setminus d[-\ell, m]$, and in some cases (when $d$ is a sufficiently large prime), we will see that in fact no such reds can exist. However, we first need to establish some results concerning the existence of tables.

\begin{lemma}\label{tableexistence}
Suppose $[0,d]$ is a gap. There exists an $N_0= N_0(k)$ such that if $\ell, m \ge N_0$ and $-\ell d, md$ are both red, then $d[-\ell, m]$ is an extendable table.
\end{lemma}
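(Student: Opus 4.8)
The plan is to show that if $\ell,m$ are large enough then both conditions in the definition of a table hold for $d[-\ell',m']$ whenever $-\ell'd,m'd$ are red with $\ell'\ge\ell$, $m'\ge m$; since the first condition (the endpoints being red) is part of the hypothesis, the real content is the gap condition, and establishing it for all such $\ell',m'$ simultaneously is exactly what "extendable" asks for. So in fact it suffices to prove the following: there is an $N_0(k)$ such that if $[0,d]$ is a gap, $\ell,m\ge N_0$, and $-\ell d, md$ are red, then every gap $d[n_1,n_2]\subset d[-\ell,m]$ satisfies $n_2-n_1\le\frac{m+\ell}{k+1}$. (Extendability is then automatic, because enlarging $\ell,m$ only enlarges the right-hand side while the set of sub-gaps of $d[-\ell,m]$ can only grow in a controlled way — a gap of $d[-\ell',m']$ that is not a gap of $d[-\ell,m]$ must use one of the new endpoints, and I will want to handle those too, so really I should prove the statement directly for arbitrary large $\ell',m'$.)

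The key tool is Corollary~\ref{gaps}. First I observe that $d[0,1]=\{0,d\}$: since $[0,d]$ is a gap, $0$ and $d$ are red and everything strictly between is blue, so in particular $d[0,1]$ is a gap (a two-term one). More usefully, rescaling by $d$: the colouring restricted to $d\Z$, reindexed by $\Z$, is a colouring in which $\{0,1\}$ is a gap of "width" $1$. Now suppose $d[n_1,n_2]$ is a gap with $w:=n_2-n_1$ large. The distance from this gap to the unit gap $d[0,1]$ is at most $\max(|n_1|,|n_2|)\le\max(\ell,m)\le\ell+m$ (measuring inside $d\Z\cong\Z$). Apply Corollary~\ref{gaps} with the small gap being $d[0,1]$ (width $1$) and the large gap being $d[n_1,n_2]$ (width $w$): if $w\ge N(c,k)\cdot 1$ then the distance between them is at least $c\,w$. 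Choosing $c=k+1$, this forces $(k+1)w\le \ell+m$, i.e. $w\le\frac{\ell+m}{k+1}$, which is precisely the gap condition. Gaps of width $w<N(k+1,k)=:N_0$ cause no problem provided $\ell+m\ge(k+1)N_0$, which holds once $\ell,m\ge N_0$ say (adjusting $N_0$ by the constant factor). The same argument applies verbatim with $\ell',m'$ in place of $\ell,m$, since it only used $\max(|n_1|,|n_2|)\le\ell'+m'$; hence $d[-\ell',m']$ is a table for all admissible $\ell'\ge\ell,m'\ge m$, so the table is extendable.

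The main thing to be careful about — and the only place the argument could go wrong — is that Corollary~\ref{gaps} concerns gaps in the colouring of $\Z$, whereas here the relevant objects $d[n_1,n_2]$ and $d[0,1]$ are arithmetic progressions inside $d\Z$ with common difference $d$, not intervals. The excerpt explicitly warns that "$d[n_1,n_2]$ being a gap is not the same as $[dn_1,dn_2]$ being a gap", so I cannot apply Corollary~\ref{gaps} to the colouring of $\Z$ directly. The fix is to apply it to the colouring of $\Z$ obtained by pulling back the colour of $dn$ to $n$ — that is, work in the dilated copy of $\Z$. All the inputs of Lemma~\ref{finiteness} and Corollary~\ref{gaps} are statements about colourings of $\Z$ containing no $(2,k)$-AP, and the dilated colouring also contains no $(2,k)$-AP (a $(2,k)$-AP with common difference $e$ in the dilated colouring pulls back to one with common difference $de$ in the original), and it has $\{0,1\}$ as a genuine gap. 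So Corollary~\ref{gaps} is applicable to the dilated colouring, and distances, widths, and the quantity $\ell+m$ all translate correctly. Once this bookkeeping is set up, the rest is the one-line computation above; I expect no further obstacle.
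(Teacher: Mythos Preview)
Your proposal is correct and follows essentially the same route as the paper: apply Corollary~\ref{gaps} inside the dilated colouring on $d\Z$ (where $\{0,1\}$ is a gap) with $c=k+1$ to bound any large sub-gap $d[n_1,n_2]$, then set $N_0=(k+1)N(k+1,k)$ to handle the small ones, and observe the argument is uniform in $\ell',m'$ so extendability is automatic. The paper's only cosmetic difference is that it records the slightly sharper dichotomy $n_2\le -(k+1)(n_2-n_1)$ or $n_1\ge (k+1)(n_2-n_1)+1$ before passing to $\ell+m\ge(k+1)(n_2-n_1)$, whereas you go there directly via the cruder distance bound $\max(|n_1|,|n_2|)\le\ell+m$.
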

\begin{proof}
We start off by showing that if the condition is satisfied for a suitable choice of $N_0$, then $d[-\ell, m]$ is a table. Let $d[n_1, n_2] \subset d[-\ell ,m]$ be a gap. By applying corollary~\ref{gaps} inside the set $d\Z$ with $n_1, n_2$ as given, $m_1 = 0$, $m_2 = 1$ and $c=k+1$, we get an $N=N(k)$ such that if $n_2-n_1 \ge N$ then either $n_2 \le -(k+1)(n_2-n_1)$ or $n_1 \ge (k+1)(n_2-n_1)+1$. In the former case we get that $\ell \ge -n_2 \ge (k+1)(n_2-n_1)$, and in the latter case we get that $m \ge n_1 \ge (k+1)(n_2-n_1)$, so either way $\ell + m \ge (k+1)(n_2-n_1)$ as required. We take $N_0(k) = (k+1)N(k)$. Then, if $n_2 - n_1 \ge N$ we are done by the above argument, while if $n_2 - n_1 < N$ we have $n_2 - n_1 < N_0/(k+1) < (\ell + m)/(k+1)$, so the condition holds either way. So $d[-\ell, m]$ is a table.

Now if $\ell' \ge \ell$ and $m'\ge m$ have $-\ell' d$ and $m'd$ both red, then $d[-\ell', m']$ also satisfies the conditions of the lemma, and so is also a table. Therefore $d[-\ell, m]$ is an extendable table.
\end{proof}
So, we can find extendable tables, provided we can find suitable reds $-\ell d$ and $md$. The following lemma helps us find the reds to use as $-\ell d$ and $md$. It is also useful because it implies that once we have an extendable table, the table can be extended to be arbitrarily large.
\begin{lemma}\label{edges}
Suppose $[0,d]$ is a gap. Every set of the form $d[n, (k+1)n]$ or $d[-(k+1)n, -n]$ contains a red element.
\end{lemma}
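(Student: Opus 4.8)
The plan is to reduce everything to control over the red \emph{multiples of $d$}, using only that $0$ and $d$ are red (which is part of the definition of $[0,d]$ being a gap). The engine is a single observation. Suppose $dj$ is a red multiple of $d$ with $j\neq 0$, and consider the arithmetic progression of length $k+2$ with first term $0$ and common difference $dj$, namely $0, dj, 2dj, \dots, (k+1)dj$. Its first two terms are red, so since the colouring contains no $(2,k)$-AP, its last $k$ terms cannot all be blue; hence $tdj$ is red for some $t\in\{2,\dots,k+1\}$. Writing $q=tj$, this produces from the red multiple $dj$ a new red multiple $dq$ with $j<q\le (k+1)j$ if $j>0$, and with $(k+1)j\le q<j$ if $j<0$. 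This ``factor $k+1$'' is precisely tuned to the shapes $d[n,(k+1)n]$ and $d[-(k+1)n,-n]$.

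For $d[n,(k+1)n]$: let $j_0$ be the largest integer with $j_0\le (k+1)n$ for which $dj_0$ is red. Since $d=d\cdot 1$ is red and $1\le (k+1)n$, we have $j_0\ge 1$. If $j_0\ge n$ then $dj_0\in d[n,(k+1)n]$ and we are done. Otherwise $1\le j_0\le n-1$, and applying the observation to $j_0$ gives a red multiple $dq$ with $j_0<q\le (k+1)j_0\le (k+1)(n-1)<(k+1)n$, contradicting the maximality of $j_0$.

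For $d[-(k+1)n,-n]$ the argument is the mirror image, with one extra step at the start: we do not know that $-d$ is red, so we first apply the no-$(2,k)$-AP hypothesis to the length-$(k+2)$ progression $d, 0, -d, -2d, \dots, -kd$, whose first two terms $d,0$ are red, to see that $-id$ is red for some $i\in\{1,\dots,k\}$. Now let $j_0$ be the smallest integer with $j_0\ge -(k+1)n$ for which $dj_0$ is red; by the previous sentence $j_0\le -1$. If $j_0\le -n$ then $dj_0\in d[-(k+1)n,-n]$ and we are done. Otherwise $-n+1\le j_0\le -1$, and applying the observation to $j_0$ gives a red multiple $dq$ with $q<j_0$ and $q\ge (k+1)j_0\ge (k+1)(-n+1)>-(k+1)n$, contradicting the minimality of $j_0$.

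I do not expect a real obstacle: the whole statement rests on the one fact that a red multiple of $d$, anchored against the red point $0$, forces another red multiple at most $k+1$ times further out. The only point needing care is the negative direction, where the gap supplies reds at $0$ and $d$ but not at $-d$, so one must bootstrap with the auxiliary progression $d,0,-d,\dots,-kd$ before the main argument can run, and then check that a single blow-up step lands $q$ inside $[-(k+1)n,-n]$ rather than overshooting past $-(k+1)n$.
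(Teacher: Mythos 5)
Your proof is correct and uses the same key idea as the paper: anchoring a length-$(k+2)$ progression at the red point $0$ with common difference a red multiple $dj$, thereby producing a new red multiple $dq$ with $q$ a factor in $\{2,\dots,k+1\}$ times $j$, and bootstrapping the negative direction with the auxiliary progression $d,0,-d,\dots,-kd$. The paper packages this as an inductive construction of sequences $m_1d<m_2d<\cdots$ and $-\ell_1 d>-\ell_2 d>\cdots$ of red multiples with consecutive ratio at most $k+1$, while you use an extremal argument (largest or smallest red multiple in the relevant range); these are the same argument in slightly different form.
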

\begin{proof}
It is sufficient to find sequences of reds $m_1 d < m_2 d < \dots$ and $-\ell_1 d > -\ell_2 d > \dots$ with the following properties.
\begin{itemize}
\item $1 \le m_1, \ell_1 \le k+1$
\item For all $i \ge 1$, we have $m_{i+1} \le (k+1) m_i$ and $\ell_{i+1} \le (k+1) \ell_i$
\end{itemize}
This is sufficient because it ensures that each $d[n, (k+1)n]$ contains some $m_i d$ and each $d[-(k+1)n, -n]$ contains some $-\ell_i d$.

We set $m_1 = 1$. To find $\ell_1$, note that the arithmetic progression \[ (d, 0, -d, -2d, \dots, -kd) \] has the first two terms being red, and therefore must contain another red term somewhere. So there is a $\ell_1 \in \{1, 2, \dots, k\}$ with $-\ell_1 d$ red.

We define the rest inductively. Given $m_i$, consider the arithmetic progression $(0, m_id, 2m_id, \dots, (k+1)m_id)$. This has the first two terms red, so there must be a $2 \le t \le k+1$ with $tm_id$ red. So set $m_{i+1} = tm_i$. Likewise, given $\ell_i$, consider the arithmetic progression $(0, -\ell_id, -2 \ell_id, \dots, -(k+1)\ell_id)$. This has the first two terms red, so there must be a $2 \le t \le k+1$ with $-t \ell_id$ red. So set $\ell_{i+1} = t \ell_i$.
\end{proof}

\begin{proposition}\label{denominators}
Suppose $[0,d]$ is a gap, and $B \ge N_0(k)$ be an integer (where $N_0(k)$ is as in Lemma~\ref{tableexistence}). Then there exists a $D_0 = D_0(B,k)$ and an extendable table $d[-\ell, m]$ such that $\ell, m \ge B$ and all reds in $[-\ell d, md]$ occur at integers of the form $rd$, where $r \in \mathbb{Q}$ has denominator at most $D_0$. 
\end{proposition}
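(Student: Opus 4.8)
The plan is to first assemble the table out of the lemmas already proved, and then — the substantive part — bound the denominators of the reds it contains.

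\emph{Constructing the table and counting reds.} First I would run the construction in the proof of Lemma~\ref{edges}: it produces reds at $m_1 d < m_2 d < \cdots$ and $-\ell_1 d > -\ell_2 d > \cdots$ with each index at most $k+1$ times the previous, so taking $m$ to be the first $m_i \ge B$ and $\ell$ the first $\ell_j \ge B$ gives reds $-\ell d, md$ with $B \le \ell, m \le (k+1)B$. Since $B \ge N_0(k)$, Lemma~\ref{tableexistence} makes $d[-\ell,m]$ an extendable table, and every red in $[-\ell d, md]$ lies within $(k+1)Bd$ of the origin. Next I would bound the number of reds in $[-\ell d, md]$ by a constant $b = b(B,k)$: Lemma~\ref{finiteness} handles $[0,(k+1)Bd]$, and for the negative side note that no $-j$ with $1 \le j < d/k$ can be red (otherwise $-j,0,j,2j,\dots,kj$ would be a $(2,k)$-AP, its last $k$ terms lying in $(0,d)$), so the first red to the left of $0$ sits at some $-t_0$ with $d/k \le t_0 \le kd$, making $[0,t_0]$ a gap for the reflected colouring $n \mapsto c(-n)$, to which Lemma~\ref{finiteness} again applies.

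\emph{Reducing to a statement about the gcd of the reds.} Let $g$ be the greatest common divisor of all reds in $[-\ell d, md]$; it divides $d$ (as $d$ is red), and every red $x$ in the table is a multiple of $g$, so $\gcd(x,d)$ is a multiple of $g$ and the denominator of $x/d$ is at most $d/g$. Hence it suffices to bound $d/g$. I would do this by passing to the colouring $c'(n)=c(gn)$: now $[0,d/g]$ is a gap for $c'$, the set $\tfrac{d}{g}[-\ell,m]$ is an extendable table for $c'$ with the same $\ell,m\in[B,(k+1)B]$ (the table and extendability conditions translate verbatim), and the reds of $c'$ in it are precisely the reds of $c$ in $[-\ell d,md]$ divided by $g$, hence have gcd $1$. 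So the proposition follows once we prove: \textbf{if $[0,d]$ is a gap, $d[-\ell,m]$ is an extendable table with $B\le\ell,m\le(k+1)B$ and $B\ge N_0(k)$, and the reds in $[-\ell d,md]$ have gcd $1$, then $d\le D_0(B,k)$.}

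\emph{Proving this, and the main obstacle.} Suppose $d$ is large. Gcd $1$ forces a red $x$ with $d\nmid x$; say $x>0$, so $x=qd+s$ with $q\le(k+1)B$ and $0<s<d$. The idea is to exploit exactly the mechanism described before Lemma~\ref{tableexistence}: feeding a red multiple $ad$ of $d$ and the red $x$ into a length-$(k+2)$ progression $ad,x,2x-ad,\dots$ forces a further red congruent to $js\pmod d$ for some $j\in\{2,\dots,k+1\}$, while $x,ad,2ad-x,\dots$ forces one congruent to $-js\pmod d$ for some $j\in\{1,\dots,k\}$; iterating — using Lemma~\ref{edges} to supply red multiples of $d$ at every scale, and extendability to enlarge the table so that all these progressions stay inside it — should produce reds realising more than $b$ distinct residues modulo $d$, contradicting the count $b(B,k)$ once $d$ is large enough. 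This last step is where I expect the real difficulty: one must show that the set of residues modulo $d$ attained by reds in the (enlarged) table, which contains $0$, generates $\Z/d\Z$, and is closed under the above progression operations, cannot remain of bounded size while being consistent with $(0,d)$ containing no red — equivalently, that a table of bounded width with an ``irreducible'' red pattern simply cannot have a huge gap. Everything before this reduction is a routine assembly of Lemmas~\ref{finiteness}--\ref{edges}; the combinatorics of the residue set, and the bookkeeping that keeps the auxiliary progressions (and hence the red-count bound) under control, is the crux.
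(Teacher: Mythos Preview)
Your setup --- constructing the table with $\ell,m\in[B,(k+1)B]$ and bounding the number of reds by some $R_0=R_0(B,k)$ --- matches the paper. The gcd reduction is correct but is an unnecessary detour: the paper works directly with each red $s$ and bounds the denominator of $s/d$.

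The genuine gap is exactly where you place it, and the attack you sketch --- enlarge the table via extendability so the auxiliary progressions fit --- cannot close it. The red-count $b(B,k)$ comes from Lemma~\ref{finiteness} applied to a window of width at most $2(k+1)Bd$; once you enlarge the table, that bound grows with it, so ``more than $b$ residues'' is no longer a contradiction. You would need the number of residues produced to outpace the growing red-count, and nothing in the sketch controls this race.

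What you are missing is that no enlargement is needed; this is precisely what the second bullet in the definition of a table is for. Given a red $s\in[-\ell d,md]$ with $s\notin d\Z$, take the neighbouring reds $n_1d<s<n_2d$ in $d[-\ell,m]$. The table condition gives $(k+1)(n_2-n_1)\le \ell+m$, and a one-line check shows that at least one of the two progressions $(n_1d,s,2s-n_1d,\dots)$ or $(n_2d,s,2s-n_2d,\dots)$ of length $k+2$ stays inside $[-\ell d,md]$. This yields a map $f$ on the set $S$ of reds off $d\Z$, with $f(s)=hd+ts$ for some integer $h$ and some $t\in\{2,\dots,k+1\}$, taking values in $S\cup T$ where $T$ is the reds on $d\Z$. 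Since $|S\cup T|\le R_0$, the orbit $s,f(s),f^2(s),\dots$ either lands in $T$ or repeats within $R_0$ steps. Composing, $f^x(s)=h'd+(t_1\cdots t_x)s$; landing in $T$ gives $(t_1\cdots t_x)s\in d\Z$, while a repetition $f^x(s)=f^y(s)$ gives $t_1\cdots t_x(t_{x+1}\cdots t_y-1)s\in d\Z$. Either way the denominator of $s/d$ is at most $(k+1)^{R_0}$, and one takes $D_0=(k+1)^{R_0}$. No residue-counting or extension of the table is involved.
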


\begin{proof}
By Lemma~\ref{edges}, we can find $\ell, m \in [B, (k+1)B]$ such that $-\ell d$ and $m d$ are both red. The conditions of Lemma~\ref{tableexistence} apply, so $d[-\ell, m]$ is an extendable table.
Since $[0,d]$ is a gap, we can apply Lemma~\ref{finiteness} to deduce that there is some $R_0 = R_0(B, k)$ such that there are at most $R_0/2$ reds in $[0, ((k+1)B+1)d]$. Similarly, by reflecting in $d/2$ and applying the same result, there are at most $R_0/2$ reds in $[-(k+1)Bd, d]$. Putting this together, there are at most $R_0$ reds in $[-(k+1)Bd, (k+1)Bd]$ and hence at most $R_0$ reds in $[-\ell d, md]$.

Let $T$ be the set of reds in $d[-\ell, m]$ and let $S$ be the set of reds in $[-\ell d, md] \setminus d[-\ell,m]$. So $S$ and $T$ are disjoint and $S \cup T$ is the set of reds in $[-\ell d, md]$. We therefore have $|S \cup T| \le R_0$.

We construct a function $f:S \to S \cup T$ as follows. Let $s \in S$. Let $n_1 d$ be the greatest element of $T$ which is less than $s$ and $n_2 d$ be the least element of $T$ which is greater than $s$ (these must exist because $-\ell d, md \in T$). $d[n_1, n_2]$ is a gap, so by the table property of $d[-\ell, m]$ we have $n_2-n_1 \le \frac{m+ \ell}{k+1} $.

If $n_1 d + (k+1)(s-n_1 d) \le md$ then the arithmetic progression $(n_1 d, s, n_1 d +2(s-n_1d), \dots, n_1 d + (k+1)(s-n_1 d))$ stays inside $[-\ell d, md]$. This arithmetic progression has its first two terms being red, so some other term must be red. We pick a $t \in \{2, 3, \dots, k+1 \}$ such that $n_1 d + t(s-n_1 d)$ is red, and define $f(s)= n_1 d + t(s-n_1 d)$.

If however $n_1 d + (k+1)(s-n_1 d) > md$, we can subtract $k(n_2-n_1) d$ from both sides to get $n_2 d + (k+1)(s-n_2 d) > md - k(n_2 - n_1)d > (m-(k+1)(n_2-n_1))d \ge -\ell d$, where the last inequality holds because $(k+1)(n_2-n_1) \le \ell + m$. So the arithmetic progression $(n_2 d, s, n_2 d + 2(s-n_2 d), \dots , n_2 d + (k+1)(s-n_2 d))$ will stay inside $[-\ell d, md]$. The first two terms are red, so pick a $t \in \{2, 3, \dots, k+1 \}$ such that $n_2 d + t(s-n_2 d)$ is red, and define $f(s)= n_2 d + t(s-n_2 d)$.

Let $s \in S$ be arbitrary. We are interested in the sequence $s, f(s), f(f(s)), \dots$. Since $|S| \le R_0$, this sequence must either land in $T$ (in which case it terminates) or repeat itself within the first $R_0$ iterations. For each $s$, $f(s)$ is given by an equation of the form $f(s) = h d + ts$, where $h$ is some integer and $t \in \{2, 3, \dots, k+1 \}$ (both $h$ and $t$ depend on $s$). Composing a string of such equations, we can see that in general $f^x(s)$ has equation $f^x(s) = h' d + t_1t_2 \dots t_x s$ where $h'$ is some integer and all the $t_i$ lie in $\{2, 3, \dots, k+1 \}$. One of the following two cases must hold.
\begin{itemize}
\item[(1)] If $f^x(s) \in T$ for some $0 \le x \le R_0$, then we have that $h'd + t_1t_2 \dots t_x s$ is a multiple of $d$, and hence we can write $s = rd$ where $r$ is a rational whose denominator divides $t_1t_2 \dots t_x$. In particular, the denominator of $r$ is at most $(k+1)^{R_0}$
\item[(2)] If $f^x(s) = f^y(s)$ with $0 \le x < y \le R_0$ then we have $h'd + t_1t_2 \dots t_x s = h''d + t_1t_2 \dots t_y s$, which can be rearranged as $t_1t_2 \dots t_x(t_{x+1}t_{x+2} \dots t_y - 1)s = (h'-h'')d$. Hence we can write $s=rd$ where $r$ is a rational whose denominator divides $t_1t_2 \dots t_x(t_{x+1}t_{x+2} \dots t_y - 1)$. Again this implies the denominator is at most $(k+1)^{R_0}$
\end{itemize}
So we have shown that every red in $[-\ell d, md]$ can be written as $rd$ where $r \in \mathbb{Q}$ has denominator at most $(k+1)^{R_0}$. Therefore the result holds with $D_0(B,k) = (k+1)^{R_0(B, k)}$
\end{proof}

We define $p_0 = p_0(k)$ to be the least prime greater than $D_0(N_0(k),k)$.

\begin{claim}\label{divisibility}
If $[0,d]$ is a gap  and $d \ge p_0$ is prime, then all reds occur at multiples of $d$.
\end{claim}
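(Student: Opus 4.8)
The plan is to first pin down all the reds inside a single table, and then to propagate that information outwards one gap at a time, each time using a single well-chosen progression of length $k+2$.

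First I would apply Proposition~\ref{denominators} with $B = N_0(k)$ to get an extendable table $d[-\ell, m]$ with $\ell, m \ge N_0(k)$ in which every red of $[-\ell d, md]$ has the form $rd$ with $r = u/q \in \mathbb{Q}$ in lowest terms and $q \le D_0 := D_0(N_0(k), k)$. Since $rd$ is an integer and $\gcd(u,q) = 1$, we have $q \mid d$, and since $d \ge p_0 > D_0 \ge k+1$ with $d$ prime, this forces $q = 1$; so every red in $[-\ell d, md]$ is already a multiple of $d$. It then remains to rule out \emph{bad} reds — reds that are not multiples of $d$ — lying outside this interval. Any bad red lies either above $md$ or below $-\ell d$; I would treat the former, the latter being handled by the mirror-image construction on the left (one cannot literally reflect the colouring, since reflection need not preserve the absence of $(2,k)$-APs, but the analogous argument goes through unchanged).

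So suppose some bad red lies above $md$ and let $s = rd$ be the least such, with $r = u/q$ in lowest terms and $q > 1$; as before $q \mid d$, so $q = d$, and I aim to contradict this. Let $ad$ be the largest red multiple of $d$ below $s$ (it exists, as $md$ qualifies) and $bd$ the least red multiple of $d$ above $s$ (it exists by Lemma~\ref{edges}). By minimality of $s$, every red in $[-\ell d, s)$ is a multiple of $d$; in particular $a \ge m$, and there is no red multiple of $d$ strictly between $ad$ and $bd$, so $d[a, b]$ is a gap. Since $d[-\ell, m]$ is extendable and $bd$ is red, $d[-\ell, b]$ is a table, and its defining property applied to $d[a,b]$ yields $b - a \le (b+\ell)/(k+1)$. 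Now comes the key step: consider the progression of length $k+2$ with first term $bd$, second term $s$, and common difference $(r-b)d$. Its first two terms are red, so, as the colouring has no $(2,k)$-AP, some term $(b + i(r-b))d$ with $2 \le i \le k+1$ is red. From $a < r < b$, the bound on $b-a$, and $i \le k+1$ one checks that this term lies strictly between $-\ell d$ and $s$, hence is a red in $[-\ell d, s)$ and so a multiple of $d$; therefore $b + i(r-b) \in \Z$, whence $ir \in \Z$, whence $q \mid i$, so $q \le k+1 < d$, contradicting $q = d$.

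The step I expect to be the main obstacle is realising that one must \emph{not} enlarge the table wholesale: the denominator bound in Proposition~\ref{denominators} grows with the size of the table, so a larger table would no longer satisfy $q \le D_0 < d$, on which the primality argument rests. Enlarging by only one gap at a time sidesteps this, because the table's gap-length bound is exactly what keeps the red forced out by the auxiliary progression inside the region already known to consist of multiples of $d$. The other delicate point is the closing arithmetic: $q \mid d$ (because $s/d$ is a reduced fraction with denominator $q$ while $s$ is an integer) together with $q \mid i$ for some $i \le k+1$ (read off from the progression) is impossible once $d$ is a prime exceeding $k+1$.
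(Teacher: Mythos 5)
Your proposal is correct and is essentially the paper's argument, merely repackaged: both establish a seed region of multiples-of-$d$ reds via Proposition~\ref{denominators}, then use the table's gap-length bound to force an auxiliary $(2,k)$-AP's third red into the already-controlled region, deducing that any nearby non-multiple red would contradict $d$ being a prime exceeding $k+1$. The only cosmetic difference is that the paper phrases the outward propagation as an explicit induction, extending the table one gap at a time, whereas you jump directly to the minimal bad red — which is the same argument with the induction unrolled.
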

\begin{proof}
We apply Proposition~\ref{denominators} with $B = N_0(k)$ to get an extendable table $d[-\ell, m]$ such that all reds in $[-\ell d, md]$ occur at integers of the form $rd$ where $r \in \mathbb{Q}$ has denominator at most $D_0(N_0(k),k) < p_0$. Since $d$ is a prime greater than the denominator of $r$, this implies $r$ must be an integer, and hence all reds in $[-\ell d, md]$ occur at multiples of $d$.

It is sufficient to show that all reds in $[-\ell' d, m'd]$ occur at multiples of $d$, for any table $d[-\ell', m']$ that is an extension of the table $d[-\ell, m]$ because by Lemma~\ref{edges} we can find arbitrarily large such tables. We will prove this by induction, extending the table one bit at a time. The table $d[-\ell, m]$ serves as a base case for our induction. 

Induction step: Suppose $d[-\ell', m']$ is a table such that the only reds in $[-\ell' d, m'd]$ occur at multiples of $d$. Let $m''d$ be the smallest multiple of $d$ greater than $m'd$ which is red. This exists by Lemma~\ref{edges}. If there are no reds apart from $m'd$ and $m''d$ in $[m'd, m''d]$, then we are done. Otherwise, let $s$ be the least red greater than $m'd$ ($s$ will not be a multiple of $d$). Since $[-\ell' d, m''d]$ is a table and $d[m' , m'']$ is a gap, we have $m''d-s < m''d-m'd \le \frac{m''d+\ell' d}{k+1}$. This implies that all the terms in the arithmetic progression $(m''d, s, m''d -2(m''d-s), \dots, m''d-(k+1)(m''d-s))$ are in $[-\ell'd, m''d]$. The first two terms $m''d$ and $s$ are both red. Therefore there is a $t \in \{2, 3, \dots, (k+1) \}$ such that $m''d - t (m''d - s)$ is red. $m''d - t (m''d-s)$ must be a multiple of $d$, because of the induction hypothesis and the minimality of $s$. Therefore $ts$ is a multiple of $d$. Since $d$ is a prime larger than $t$, we deduce that $s$ is a multiple of $d$, a contradiction.

This completes the induction step for $m'$. Mirroring the same argument tells us we can increment $\ell'$ as well, completing the proof.
\end{proof}

\begin{claim}\label{smallprimes}
Let $p$ be a prime. There exists a positive integer $n_p$ such that if $[0,d]$ is a gap with $p^{n_p} | d$, then we can find an arithmetic progression of length at least $d$ and common difference $p$ all of whose elements are blue.
\end{claim}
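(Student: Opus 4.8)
The plan is to push everything through Proposition~\ref{denominators}. That proposition locates the reds in a window around the gap on integers of the form $rd$ with $r$ of bounded denominator, and the point is that when $d$ carries a large enough power of $p$, \emph{every} such integer is itself a multiple of $p$. Consequently the arithmetic progression $1, 1+p, 1+2p,\dots$, which lives entirely in the residue class $1 \bmod p$, must avoid every red throughout the controlled window — and that window turns out to reach far enough out to contain a blue progression of length $d$.

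First I would fix the constants. Put $B_0 := \max(N_0(k), p)$, with $N_0(k)$ as in Lemma~\ref{tableexistence}, let $D_0 := D_0(B_0, k)$ be the quantity supplied by Proposition~\ref{denominators}, and set $n_p := \lfloor \log_p D_0 \rfloor + 1$. Since $B_0$ — and hence $D_0(B_0,k)$ — depends only on $p$ and $k$, so does $n_p$. Now suppose $[0,d]$ is a gap with $p^{n_p}\mid d$, so that $v_p(d)\ge n_p$, where $v_p$ denotes the $p$-adic valuation. Applying Proposition~\ref{denominators} with $B=B_0$ produces integers $\ell,m\ge B_0$ such that every red in $[-\ell d, md]$ has the form $rd$ for some $r\in\mathbb{Q}$ of denominator at most $D_0$.

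The first real step is to observe that every red in $[-\ell d, md]$ is divisible by $p$. Indeed, if $x = rd \ne 0$ is such a red and $r = a/b$ in lowest terms (so $a\in\mathbb{Z}$ and $1\le b\le D_0$), then $p^{v_p(b)}\le b\le D_0$ gives $v_p(b)\le\lfloor\log_p D_0\rfloor$, and since $x = ad/b$ is a nonzero integer, $v_p(x) = v_p(a)+v_p(d)-v_p(b) \ge v_p(d)-v_p(b) \ge n_p - \lfloor\log_p D_0\rfloor \ge 1$, so $p\mid x$. The second step is to exhibit the progression: let $A := \{\, 1+ip : 0\le i\le d-1 \,\}$, an arithmetic progression of length $d$ and common difference $p$. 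Every element of $A$ is $\equiv 1\pmod p$, hence not divisible by $p$, hence — by the first step — not red, provided it lies in $[-\ell d, md]$. And it does: for $0\le i\le d-1$ we have $1\le 1+ip \le pd-(p-1) < pd \le md$, using $p\ge 2$ and $m\ge B_0\ge p$. Therefore every element of $A$ is blue, which is what was wanted.

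I do not expect a genuine obstacle here. The one thing to get right is that $n_p$ may indeed be taken to depend on $p$ and $k$ alone: this works precisely because the claim allows $n_p$ to vary with $p$, so we are free to take $B_0\ge p$ (forcing the controlled window $[-\ell d, md]$ to reach past $pd$) while at the same time absorbing the harmless but $p$-dependent size of $D_0(B_0,k)$ into $n_p$. One could alternatively keep $B_0 = N_0(k)$ and extend the table incrementally, as in the proof of Claim~\ref{divisibility}, tracking how the $p$-adic valuation of the reds degrades at each extension step; that route also works but is messier, so I would take the one above.
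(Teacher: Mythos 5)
Your proof is correct and follows essentially the same route as the paper's: both apply Proposition~\ref{denominators} with $B = \max(p, N_0(k))$, choose $n_p$ so that $p^{n_p}$ exceeds $D_0(B,k)$, and then observe that the progression of residues $\equiv 1 \pmod p$ inside $[-\ell d, md]$ avoids all reds. The only cosmetic differences are that you carry out the divisibility step via $p$-adic valuations (the paper phrases it as $p^{n_p}$ having to divide the denominator of $r$, a contradiction) and you truncate the progression to length exactly $d$ rather than taking the whole residue class.
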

\begin{proof}
We apply Proposition~\ref{denominators} with $B = \max(p, N_0(k))$ to get a table $d[-\ell, m]$ with $\ell, m \ge p$ such that all reds in $[-\ell d, md]$ have the form $rd$ where $r \in \mathbb{Q}$ has denominator at most $D_0(B, k)$. Choose $n_p$ such that $p^{n_p} > D_0(B, k)$. We take our arithmetic progression to be the set of elements of $[-\ell d, md]$ that are congruent to 1 mod $p$. This is an arithmetic progression with common difference $p$ of length at least $\frac{d(\ell + m)}{p} \ge d$.

If $p^{n_p} | d$ then all the elements of this progression are blue. Indeed, if one was red, we could write it as $rd$ with $r \in \mathbb{Q}$ having denominator less than $p^{n_p}$. But since $p^{n_p} | d$ and $p \nmid rd$, $p^{n_p}$ must divide the denominator of $r$, a contradiction.
\end{proof}

We are finally in a position to prove that $(2, k)$ is findable. In fact, we prove this slightly stronger result:

\begin{theorem}\label{finally!}
Let $\delta > 0$ and let $k$ be a positive integer. There is a $q_0=q_0(k, \delta)$ such that whenever $q \ge q_0$ is prime and $\Z/q\Z$ is coloured red and blue such that at least $\delta q$ of the elements are blue and at least two of the elements are red, there is a $(2, k)$-AP.
\end{theorem}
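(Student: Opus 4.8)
The plan is to argue by contradiction. Fix a red--blue colouring of $\Z/q\Z$ with at least $\delta q$ blue elements, at least two red elements, and no $(2,k)$-AP, and aim for a contradiction once $q$ is large. Two reductions are available throughout: we may replace the colouring by any affine image $x\mapsto ax+b$ with $a$ a unit mod $q$ (and re-pass to the induced colouring of $\Z$), since this leaves all three hypotheses and the value of $q$ intact; and, as in Section~\ref{2k}, we work with the induced periodic colouring of $\Z$ and use the results of that section. Set $D_0:=D_0(N_0(k),k)$, and note $D_0\ge k+1$ (recall $D_0=(k+1)^{R_0}$ from Proposition~\ref{denominators}). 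The goal is to produce a gap which, after translation, has the form $[0,d]$ with $d=s\,d'$, where $s$ is the largest divisor of $d$ all of whose prime factors are at most $D_0$ (so every prime factor of $d'$ exceeds $D_0$), where $s$ is bounded by a constant depending only on $k$, and where $d'\ge 2$. Given such a gap, I would conclude that every red lies in $d'\Z$; then by periodicity the point $q$, being a translate by $q$ of the red point $0$, is itself red, so $d'\mid q$, contradicting that $q$ is prime with $2\le d'\le d<q$. (Here $d<q$ because a gap of common difference $1$ containing $q$ or more integers would force all but one element of $\Z/q\Z$ to be blue, against there being two reds; note also that no lower bound on the number of reds is ever used.)

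The first task is to prove the divisibility statement just used, which strengthens Claim~\ref{divisibility}: \emph{if $[0,d]$ is a gap and $d'$ is the largest divisor of $d$ all of whose prime factors exceed $D_0$, then every red lies in $d'\Z$.} The argument runs parallel to Claim~\ref{divisibility}. Proposition~\ref{denominators} with $B=N_0(k)$ gives an extendable table $d[-\ell,m]$ in which every red has the form $rd$ with $v:=\mathrm{denom}(r)\le D_0$; then $v\mid d$, and since $v$ is coprime to $d'=d/s$ we get $v\mid s$, so $rd=(u/v)\,s\,d'=u(s/v)d'\in d'\Z$; hence all reds inside the table lie in $d'\Z$. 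One then enlarges the table one red-multiple-of-$d$ at a time, as in Claim~\ref{divisibility}: if $m''d$ is the least red multiple of $d$ beyond the current right edge $m'd$ and $s$ is a red in $(m'd,m''d)$, the progression $(m''d,\,s,\,m''d-2(m''d-s),\dots,\,m''d-(k+1)(m''d-s))$ stays inside the enlarged table (by the table property applied to the gap $d[m',m'']$), so one of its later terms $m''d-t(m''d-s)$, with $2\le t\le k+1$, is red; this term is smaller than $s$, hence — by minimality of $s$ among reds not yet handled — already known to lie in $d'\Z$, and since $m''d\in d'\Z$ and $\gcd(t,d')=1$ (as every prime factor of $d'$ exceeds $D_0\ge k+1\ge t$) we obtain $s\in d'\Z$. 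The only change from Claim~\ref{divisibility} is that here such an $s$ is allowed — it need only be a multiple of $d'$, not of $d$ — so instead of reaching a contradiction one runs a secondary induction through the reds of each interval $(m'd,m''d)$ in increasing order. By Lemma~\ref{edges} the tables may be grown without bound in both directions, so every red is covered.

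To construct the required gap I would combine Szemer\'edi's theorem with iterated applications of Claim~\ref{smallprimes}. Put $M:=\prod_{p\le D_0}p^{\,n_p-1}$, a constant depending only on $k$, where $n_p$ is as in Claim~\ref{smallprimes}. Since there are at least $\delta q$ blue elements, Szemer\'edi's theorem (Theorem~\ref{szap}, applied in $\Z/q\Z$ and valid once $q$ is large in terms of $M$ and $\delta$) yields a blue AP of length $M+2$; rescaling by the inverse of its common difference makes it a blue interval, which we extend to the nearest reds on either side to obtain a gap of common difference $1$, of the form $[0,d_1]$ with $d_1\ge M+1$ after translation. Then, while the current gap $[0,d]$ satisfies $p^{\,n_p}\mid d$ for some prime $p\le D_0$, Claim~\ref{smallprimes} yields a blue AP of common difference $p$ with more than $d$ terms; rescaling by $p^{-1}\bmod q$ turns it into a blue interval, which again extends to a gap $[0,\tilde d]$ of common difference $1$ with $\tilde d>d$. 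Since this length strictly increases at each step while staying less than $q$, the process halts, at a gap $[0,d]$ with $v_p(d)<n_p$ for every prime $p\le D_0$ (here $v_p(d)$ is the exponent of $p$ in $d$); hence its smooth part is $s=\prod_{p\le D_0}p^{v_p(d)}\le M$. As the length only increased, $d\ge M+1>M\ge s$, so $d'=d/s\ge 2$, and the gap has the desired shape. Taking $q_0$ large enough in terms of $k$ and $\delta$ — at least $p_0(k)$ and at least the Szemer\'edi threshold for length-$(M+2)$ progressions of density $\delta$ — then completes the proof.

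The step I expect to demand the most care is the bookkeeping in the strengthened Claim~\ref{divisibility}: tracking which reds are already known to lie in $d'\Z$, confirming that the auxiliary progressions stay inside a table (so the ``not a $(2,k)$-AP'' deduction is legitimate), and checking that the nested induction genuinely terminates. A lesser point to verify is that each rescaling by $p^{-1}$, and the earlier affine reductions, preserve every running hypothesis; in particular $p$ must be invertible mod $q$, which is why the requirement $q\ge p_0(k)>D_0$ is imposed.
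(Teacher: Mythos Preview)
Your argument is correct, and it follows the same overall architecture as the paper's proof: use Szemer\'edi plus Claim~\ref{smallprimes} to produce a gap $[0,d]$ whose $D_0$-smooth part is bounded, then use a divisibility statement to force a nontrivial divisor of $q$. The difference lies in the endgame. The paper, having found that $d$ has a prime factor $p\ge p_0$, applies the affine map $x\mapsto x\cdot d/p$ to the colouring of $\Z/q\Z$; this turns the gap $[0,d]$ into a gap $[0,p]$ with $p$ prime, and then Claim~\ref{divisibility} applies verbatim. You instead keep the composite gap $[0,d]$ and prove a genuine strengthening of Claim~\ref{divisibility}: every red lies in $d'\Z$, where $d'$ is the $D_0$-rough part of $d$. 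Your nested induction (primary induction over successive red multiples of $d$, secondary induction through the reds of each new interval in increasing order) is sound; the key point, that $\gcd(t,d')=1$ because $t\le k+1\le D_0$ and every prime factor of $d'$ exceeds $D_0$, is exactly what replaces the primality of $d$ in the paper's argument. The paper's rescaling trick is shorter and avoids the secondary induction, but your route has the minor conceptual advantage of showing that the full rough part of $d$, not just a single prime factor, governs the red set. One small imprecision: Claim~\ref{smallprimes} as stated only gives a blue progression of length \emph{at least} $d$, not more than $d$; but this suffices, since after rescaling and extending to a gap you get $\tilde d-1\ge d$, hence $\tilde d>d$, and the iteration still strictly increases.
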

\begin{proof}
By Szemer\' edi's theorem, we can pick $q_0$ such that we are guaranteed to find an arithmetic progression of length at least
\[ \prod_{\substack{ p \: \text{prime} \\ p < p_0}} p^{n_p} \]
all of whose elements are blue, where $p_0$ is as in Claim~\ref{divisibility} and $n_p$ is as in Claim~\ref{smallprimes}.

Let $(a_1, a_2, \dots, a_d-1)$ be an arithmetic progression of maximal length, subject to all the elements being blue. By applying an automorphism of $\Z/q\Z$ we can arrange for this progression to be $(1, 2, \dots, d-1)$. By maximality, $0$ and $d$ must both be red. Since there are at least two red elements, we have $d<q$

Let us assume there is no $(2, k)$-AP. If there is some prime $p < p_0$ such that $p^{n_p} | d$, we may apply Claim~\ref{smallprimes}, working in the colouring induced in $\Z$, to find a blue arithmetic progression of common difference $p$ and length at least $d$. $p \neq 0$ in $\Z/q\Z$ since $q \ge p_0$, and so the arithmetic progression descends to a valid arithmetic progression in $\Z/q\Z$. This contradicts the maximality of $(a_1, a_2, \dots, a_d-1)$.

If, on the other hand, there is no prime $p < p_0$ such that $p^{n_p} | d$, then consider the prime factorisation of $d$. Since
\[ d > \prod_{\substack{ p \: \text{prime} \\ p < p_0}} p^{n_p} \]
there must be some prime $p \ge p_0$ such that $p|d$. We manipulate the colouring of $\Z/q\Z$ by applying the automorphism
\begin{align*}
\phi: \Z/q\Z & \to \Z/q\Z \\
x & \mapsto xd/p 
\end{align*}
Then we will have that $0$ and $p$ are red while $[1, 2, \dots, p-1]$ are blue. In the colouring induced on $\Z$, $[0, p]$ is a gap and $p \ge p_0$ is prime, so we may apply Claim~\ref{divisibility} to deduce that all reds in the colouring occur at multiples of $p$. $q \in \Z$ must be red since it corresponds to $0 \in \Z/q\Z$, therefore $p|q$. Since $q$ is prime, we have $p=q$, but $p \le d < q$, a contradiction.
\end{proof}
\section{Construction for $(14, 14)$}\label{14vs14}
We aim to produce colourings for $\Z/p\Z$ which contain no $(14, 14)$-AP. To do this, it suffices to give such a colouring of $\R/\Z$, since this induces colourings of $\Z/p\Z$ for all $p$ via the map
\begin{align*}
\phi : \Z/p\Z & \to \R/\Z \\
x & \mapsto \frac{x}{p}
\end{align*}
and the induced colourings contain $(14,14)$-APs only if the original colouring contained $(14,14)$-APs. The majority of this section is devoted to defining a red-blue colouring of $\R/\Z$ and proving that this colouring has no $(14,14)$-AP.

\subsection{Fractal construction}\label{fc1}
We define a \emph{configuration} to be a partial colouring of $\R/\Z$ together with a partition of the uncoloured subset into intervals, and an assignment of a direction to each of these intervals. Each interval can either be assigned direction $+1$, i.e. ``forwards'', or direction $-1$, i.e. ``backwards''. We proceed to define a fractal by specifying an initial configuration $c_0$ and a replacement rule telling us how to go from $c_i$ to $c_{i+1}$. Note that, as a set, $\R/\Z$ may be identified with $[0,1)$. We will often do this for convenience.

Let $c_0$ be the following configuration:
\begin{itemize}
\item $[0,1/4]$ coloured red
\item $(1/4, 1/2)$ uncoloured interval direction $+1$
\item $[1/2, 3/4]$ coloured blue
\item $(3/4, 1)$ uncoloured interval direction $-1$
\end{itemize}
This is shown in Figure~\ref{c0}. We now define a replacement rule for directed intervals. Let $(a,b)$ be one of the uncoloured intervals, and $\epsilon$ be its direction. We define the function $f:[0,1] \to [a,b]$ as follows:
\[ f : x \mapsto \begin{cases}
    a + (b-a)x  & \quad \text{if } \epsilon = +1 \\
    b - (b-a)x  & \quad \text{if } \epsilon = -1 
\end{cases} \]
We replace $(a,b) = f((0,1))$ with the following:

\begin{itemize}
\item $f((0,1/5))$ uncoloured, direction $\epsilon$
\item $f([1/5, 2/5])$ coloured blue
\item $f((2/5, 3/5))$ uncoloured, direction $-\epsilon$
\item $f([3/5, 4/5])$ coloured red
\item $f((4/5, 1))$ uncoloured, direction $\epsilon$.
\end{itemize}

\begin{figure}

\begin{tikzpicture}[scale=3]
  \draw[thick] (0,1) -- (4,1)
  node[pos=0, below]    {0}
  node[pos=0.25, below] {1/4}
  node[pos=0.5, below]  {1/2}
  node[pos=0.75, below] {3/4}
  node[pos=1, below]    {1};
  \draw[->, thick]  (1,1) -- (1.5,1);
  \draw[->, thick]  (4,1) -- (3.5,1);
  \draw[red, very thick]  (0,1)  -- (1,1) ;
  \draw[blue, very thick] (2,1) -- (3,1);
\end{tikzpicture}
\caption{The configuration $c_0$}
\label{c0}
\end{figure}
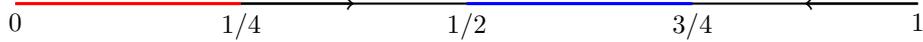

\begin{figure}
\begin{tikzpicture}[scale=1]
  \draw[thick] (0,0) -- (5,0)
  node[pos=0, below]    {0}
  node[pos=1, below]    {1};
  \draw[thick] (7,0) -- (12,0)
  node[pos=0, below]    {0}
  node[pos=0.2, below] {1/5}
  node[pos=0.4, below]  {2/5}
  node[pos=0.6, below]  {3/5}
  node[pos=0.8, below] {4/5}
  node[pos=1, below]    {1};
  \draw[->, thick]  (0,0) -- (2.5,0);
  \draw[->, thick]  (7,0) -- (7.5,0);
  \draw[->, thick]  (10,0) -- (9.5,0);
  \draw[->, thick]  (11,0) -- (11.5,0);
  \draw[red, very thick]  (10,0)  -- (11,0) ;
  \draw[blue, very thick] (8,0) -- (9,0);
  \draw[->] (5.8,0) -- (6,0) node[below] {Becomes} -- (6.2,0);
\end{tikzpicture}
\caption{The replacement rule for a directed interval. This is shown for the interval $(0,1)$. The rule for a general interval is obtained by applying the linear function $f$.}
\label{replacement} 
\end{figure}
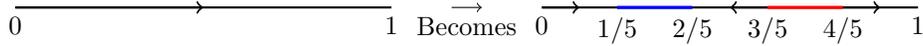

This is shown in Figure~\ref{replacement}. We can construct configurations $(c_i)_{i=0}^\infty$ inductively. For each $i \ge 0$, define $c_{i+1}$ to be the configuration obtained by applying the replacement rule to all the uncoloured intervals in $c_i$. Figure~\ref{firsttwo} shows $c_1$ and $c_2$. These colourings have the property that if a point receives a given colour in $c_i$, it will also receive that colour in $c_j$ for all $j \ge i$. Therefore we can define a partial colouring $c_\infty$, by colouring each point red if there is some $i$ such that $c_i$ colours that point red, and colouring it blue if there is some $i$ such that $c_i$ colours that point blue. This partial colouring $c_\infty$ will be a fractal. There will be some points (for example, the point $3/8$) that will not receive either colour in $c_\infty$, so this is not yet a full colouring of $\R/\Z$. We finally construct a colouring $c'_\infty$ by taking $c_\infty$ and colouring all the uncoloured points in some arbitrary way (for example, we could colour all the uncoloured points blue). This $c'_\infty$ is our counterexample colouring.

We now establish a symmetry of the configurations $c_i$, and use it to deduce a symmetry property of $c_\infty$. Let $T$ denote the following operation for a configuration:
\begin{itemize}
\item Translate everything by 1/2
\item For all coloured points, switch the colour between red and blue
\item For all uncoloured intervals, reverse the direction of the interval
\end{itemize}
One can see by inspection that $T(c_0) = c_0$. Also, by inspecting the replacement rule, we also have that, for any $i$, if $T(c_i)=c_i$, then $T(c_{i+1})=c_{i+1}$. Therefore, by induction, $T$ is a symmetry of $c_i$ for all $i$. By going to the limit, we can deduce that in $c_\infty$, $x$ is red if and only if $x+1/2$ is blue.

\begin{figure}
\begin{tikzpicture}[scale=3]
  \draw[thick] (0,0) -- (4,0)
  node[pos=0, below]    {0}
  node[pos=0.25, below] {1/4}
  node[pos=0.5, below]  {1/2}
  node[pos=0.75, below] {3/4}
  node[pos=1, below]    {1};
  \draw[->, thick]  (1,0) -- (1.1,0);
  \draw[->, thick]  (1.6,0) -- (1.5,0);
  \draw[->, thick]  (1.8,0) -- (1.9,0);
  \draw[->, thick]  (3.2,0) -- (3.1,0);
  \draw[->, thick]  (3.4,0) -- (3.5,0);
  \draw[->, thick]  (4,0) -- (3.9,0);
  \draw[red, very thick]  (0,0)  -- (1,0) ;
  \draw[red, very thick]  (1.6,0)  -- (1.8,0) ;
  \draw[red, very thick]  (3.2,0)  -- (3.4,0) ;
  \draw[blue, very thick] (2,0) -- (3,0);
  \draw[blue, very thick] (1.2,0) -- (1.4,0);
  \draw[blue, very thick] (3.6,0) -- (3.8,0);
\end{tikzpicture}

\begin{tikzpicture}[scale=3]
  \draw[thick] (0,0) -- (4,0)
  node[pos=0, below]    {0}
  node[pos=0.25, below] {1/4}
  node[pos=0.5, below]  {1/2}
  node[pos=0.75, below] {3/4}
  node[pos=1, below]    {1};
  \draw[->]  (1,0) -- (1.03,0);
  \draw[->]  (1.12,0) -- (1.09,0);
  \draw[->]  (1.16,0) -- (1.19,0);
  \draw[->]  (1.44,0) -- (1.41,0);
  \draw[->]  (1.48,0) -- (1.51,0);
  \draw[->]  (1.6,0) -- (1.57,0);
  \draw[->]  (1.8,0) -- (1.83,0);
  \draw[->]  (1.92,0) -- (1.89,0);
  \draw[->]  (1.96,0) -- (1.99,0);
  \draw[->]  (3.04,0) -- (3.01,0);
  \draw[->]  (3.08,0) -- (3.11,0);
  \draw[->]  (3.2,0) -- (3.17,0);
  \draw[->]  (3.4,0) -- (3.43,0);
  \draw[->]  (3.52,0) -- (3.49,0);
  \draw[->]  (3.56,0) -- (3.59,0);
  \draw[->]  (3.84,0) -- (3.81,0);
  \draw[->]  (3.88,0) -- (3.91,0);
  \draw[->]  (4,0) -- (3.97,0);
  \draw[red, very thick]  (0,0)  -- (1,0) ;
  \draw[red, very thick]  (1.6,0)  -- (1.8,0) ;
  \draw[red, very thick]  (3.2,0)  -- (3.4,0) ;
  \draw[red, very thick]  (1.12,0)  -- (1.16,0) ;
  \draw[red, very thick]  (1.44,0)  -- (1.48,0) ;
  \draw[red, very thick]  (1.92,0)  -- (1.96,0) ;
  \draw[red, very thick]  (3.04,0)  -- (3.08,0) ;
  \draw[red, very thick]  (3.52,0)  -- (3.56,0) ;
  \draw[red, very thick]  (3.84,0)  -- (3.88,0) ;
  \draw[blue, very thick] (2,0) -- (3,0);
  \draw[blue, very thick] (1.2,0) -- (1.4,0);
  \draw[blue, very thick] (1.04,0) -- (1.08,0);
  \draw[blue, very thick] (1.52,0) -- (1.56,0);
  \draw[blue, very thick] (1.84,0) -- (1.88,0);
  \draw[blue, very thick] (3.6,0) -- (3.8,0);
  \draw[blue, very thick] (3.12,0) -- (3.16,0);
  \draw[blue, very thick] (3.44,0) -- (3.48,0);
  \draw[blue, very thick] (3.92,0) -- (3.96,0);
\end{tikzpicture}
\caption{Above: the configuration $c_1$. Below: the configuration $c_2$}
\label{firsttwo}
\end{figure}
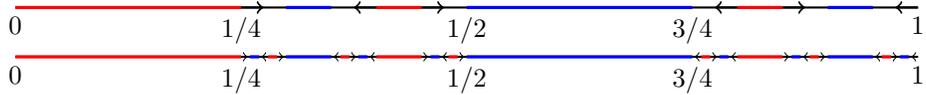
\subsection{Proof there is no $(14,14)$-AP}
When we have a partial colouring, we define an $(a,b)$-AP to be an arithmetic progression of length $a+b$ such that none of the first $a$ elements are blue and none of the last $b$ elements are red. When the partial colouring is in fact a full colouring, this notion coincides with the usual notion of an $(a,b)$-AP.

We start off by noting that no $(14,14)$-AP in $c'_\infty$ can have common difference 0 or 1/2, because in the former case all points have the same colour, and in the latter case the colouring has period 2. So it is sufficient to focus on the case where $d \in (-1/2,0) \cup (0,1/2)$. We will prove the following:
\begin{theorem}\label{cinfty}
$c_\infty$ does not contain any $(14,14)$-AP with common difference $d \in (-1/2, 0) \cup (0,1/2)$.
\end{theorem}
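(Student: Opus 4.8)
The plan is to argue by contradiction. Suppose $c_\infty$ contains a $(14,14)$-AP $a_1,\dots,a_{28}$ with common difference $d\in(-1/2,0)\cup(0,1/2)$, so that $a_1,\dots,a_{14}$ are non-blue and $a_{15},\dots,a_{28}$ are non-red; I want to reach a contradiction. The two ingredients are the symmetry $T$ and the self-similarity of the construction: every uncoloured interval occurring at any stage carries, after the corresponding affine rescaling (and a reflection when its direction is $-1$), another copy of $c_\infty$.

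\emph{Reducing $d$.} Reversing the progression interchanges the non-blue and the non-red halves and replaces $d$ by $-d$; applying $T$ afterwards (translation by $1/2$ together with the colour swap, which is a symmetry of $c_\infty$) restores the colours, so a $(14,14)$-AP of common difference $d$ yields one of common difference $-d$. Hence we may assume $d\in(0,1/2)$. Next, if all $28$ points lie inside a single uncoloured interval, apply the inverse of the corresponding affine map to obtain a $(14,14)$-AP — or its reverse, already handled — in $c_\infty$ with strictly larger common difference. As the uncoloured intervals have lengths tending to $0$, this descent must stop, and when it does we have a $(14,14)$-AP, still with $d\in(0,1/2)$, whose $28$ points are \emph{not} all contained in one of the two level-$0$ uncoloured arcs $(1/4,1/2)$ and $(3/4,1)$. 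Since $[0,1/4]$ and $[1/2,3/4]$ each have length $1/4$, a non-wrapping progression with small step cannot skip over either of them, so the progression in fact meets a solid block; the remaining possibility, that $d$ lies in a fixed larger range, I would dispose of directly, the progression then being spread over only a bounded number of the coarse pieces.

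\emph{The transition.} The heart of the argument is a finite, scale-localised analysis of where the non-blue half meets the non-red half. Working at the level $i$ at which the pieces created by the $i$-th replacement, of length $\tfrac1{4\cdot5^{i}}$, are comparable to $d$, a window of $14$ consecutive terms spans only a bounded number of level-$i$ pieces; each such piece is either a solid monochromatic block, which then forces the colour of every progression point inside it, or an uncoloured interval whose internal structure is again the replacement word $(\text{uncoloured},\text{blue},\text{uncoloured},\text{red},\text{uncoloured})$ at the next scale. The crucial structural fact is that every solid red block is flanked, at all smaller scales, by solid blue blocks accumulating at each of its endpoints (and symmetrically for blue); this is what prevents a run of non-blue points from extending past a solid block it meets. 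Tracking the placement of $a_{14}$ (non-blue) and $a_{15}$ (non-red) relative to these pieces, and using $T$ to recast the non-red half $a_{15},\dots,a_{28}$ as a second non-blue run, I would enumerate the finitely many combinatorial types of the transition window and check that in each of them one of the first $14$ points is forced blue, or one of the last $14$ is forced red. The constant $14$ is not optimised; it is merely large enough to make this count close.

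\emph{Main obstacle.} I expect the last step to be the hard one. Even after the symmetry and rescaling reductions there is still a continuum of progressions — every admissible $d$, every position — so the ``finite check'' has to be organised as a genuine finite case split, over the ways the transition window can sit relative to the level-$i$ and level-$(i+1)$ block boundaries and their directions, with every case closed uniformly in $d$; choosing the scale $i$ so that the window is guaranteed to straddle a forced blue or red block, and then getting the bookkeeping right, is the delicate part. A secondary point to handle is the (measure-zero) set of points left uncoloured by $c_\infty$, which are simultaneously non-blue and non-red: one checks that these are too sparse to exploit — for instance because any two of them are separated by a solid block — so they cannot be used to assemble the forbidden pattern.
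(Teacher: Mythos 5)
Your reduction to $d\in(0,1/2)$ via the symmetry $T$ matches the paper exactly, and your instinct that the argument should proceed by working at the scale comparable to $d$ is also correct. But two things go wrong.

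\textbf{The self-similarity claim is false.} You assert that every uncoloured interval ``carries, after the corresponding affine rescaling (and a reflection when its direction is $-1$), another copy of $c_\infty$.'' It does not. The replacement rule fills an uncoloured interval with the word (uncoloured, blue, uncoloured, red, uncoloured) in fifths, whereas $c_0$ is the word (red, uncoloured, blue, uncoloured) in quarters. These are different patterns with different lengths and a different coloured/uncoloured order, so rescaling an AP out of an uncoloured interval lands you in a different fractal, not in $c_\infty$, and the descent does not close on the statement of the theorem. (There is also a termination issue: the level-$0$ uncoloured intervals have length $1/4$, so mapping into $(1/4,1/2)$ can give a rescaling factor of $1$, and ``$d$ strictly increases'' is not automatic.) The paper avoids this entirely: since $c_i$ is a subcolouring of $c_\infty$, any $(14,14)$-AP in $c_\infty$ is already a $(14,14)$-AP in every finite-level $c_i$, and the paper simply analyses $c_i$ directly for the appropriate $i$.

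\textbf{The finite check is the proof, and it is missing.} You say you ``would enumerate the finitely many combinatorial types of the transition window and check that in each of them one of the first $14$ points is forced blue, or one of the last $14$ is forced red,'' and you acknowledge this is ``the hard one.'' This is precisely where the content of the theorem lives and is where the paper does real work. Concretely, the paper partitions $(0,1/2)$ into the ranges $(\frac{1}{4\cdot 5^{k+1}},\frac{1}{4\cdot 5^k}]$, $[\frac14,\frac{5}{16}]$, $[\frac{5}{16},\frac{7}{20}]$, $[\frac{7}{20},\frac25]$, and $[\frac12-\frac{1}{8\cdot 5^k},\frac12-\frac{1}{8\cdot 5^{k+1}})$. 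For the middle three ranges it introduces a genuine combinatorial device, a \emph{ladder} (a non-decreasing sequence with consecutive gaps $\le 1/4$ spanning total length $1$, hence forced to hit the blue block $[1/2,3/4]$ of $c_0$), and builds an explicit ladder out of the AP's points to kill $(7,0)$-, $(3,3)$-, or $(8,0)$-APs. For the two infinite families it exploits the repeating four-block structure of $c_k$ (blue, uncoloured, red, uncoloured, each piece of length $\frac{1}{4\cdot 5^k}$ or longer) together with a pigeonhole count in $c_{k+1}$, respectively a reflection $d'=1/2-d$ argument. Your ``crucial structural fact'' about blue accumulating at the endpoints of red blocks is true but is not, on its own, enough to force the contradiction; the actual case split uses the explicit block lengths and directions, and none of that is present in your proposal. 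As written, this is an outline with a flawed reduction, not a proof.

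One thing you get right that is worth noting: your concern about the uncoloured remainder of $c_\infty$ is legitimate, and the paper's device for handling it, defining a $(14,14)$-AP in a \emph{partial} colouring as ``none of the first $14$ blue, none of the last $14$ red,'' does exactly what you want and is cleaner than arguing sparsity.
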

This implies that $c'_\infty$ does not contain any $(14,14)$-AP with $d \in (-1/2, 0) \cup (0,1/2)$.
\begin{proof}
Firstly, we observe that we may restrict attention to $d \in (0,1/2)$. Indeed, if $d \in (-1/2,0)$, then any $(14,14)$-AP $\{a, a+d, \dots , a+27d \}$ with common difference $d$ will give rise to another $(14,14)$-AP, $\{a+27d+1/2, a+26d + 1/2, \dots, a+d+1/2, a+1/2 \}$, with common difference $-d$. If the former is a $(14,14)$-AP then so is the latter because of the symmetry property of $c_\infty$.

We split into the following cases, according to where in the interval $(0,1/2)$ the common difference $d$ lies:
\begin{itemize}
\item[(1)] $d \in (\frac{1}{4 \cdot 5^{k+1}}, \frac{1}{4 \cdot 5^k}]$ for some integer $k \ge 0$
\item[(2)] $d \in [\frac{1}{4}, \frac{5}{16}]$
\item[(3)] $d \in [\frac{5}{16}, \frac{7}{20}]$
\item[(4)] $d \in [\frac{7}{20}, \frac{2}{5}]$
\item[(5)] $d \in [\frac{1}{2}-\frac{1}{8 \cdot 5^k}, \frac{1}{2}-\frac{1}{8 \cdot 5^{k+1}})$ for some integer $k \ge 0$.
\end{itemize}
Note that at least one of these cases applies. In each case we will show that, for some given $i$ and $a, b \le 14$, $c_i$ does not contain an $(a,b)$-AP with common difference $d$. This implies $c_\infty$ does not contain a $(14,14)$-AP with common difference $d$.

\textbf{Case (1)} Let $k \ge 0$ be an integer. We claim $c_{k+1}$ does not contain a $(14,14)$-AP with $d \in (\frac{1}{4 \cdot 5^{k+1}}, \frac{1}{4 \cdot 5^k}]$.

We start off by considering $c_k$. $c_k$ consists of the following repeating pattern:
\begin{itemize}
\item a blue interval of length at least $\frac{1}{4 \cdot 5^k}$
\item an uncoloured interval of length $\frac{1}{4 \cdot 5^k}$
\item a red interval of length at least $\frac{1}{4 \cdot 5^k}$
\item an uncoloured interval of length $\frac{1}{4 \cdot 5^k}$
\end{itemize}
This repeats itself over and over as we go around $\R/\Z$. The lengths of the red and blue intervals will vary as the pattern repeats itself. The pattern has the additional property that in any pair of adjacent coloured intervals, at least one of the coloured intervals has length exactly  $\frac{1}{4 \cdot 5^k}$. All these properties can easily be verified by induction on $k$.

We say an arithmetic progression jumps over an interval if it contains points before and after the interval, but does not contain any points in the interval. Obviously this can only happen if the length of the interval is smaller than the common difference. In this case, we assume $d \le \frac{1}{4 \cdot 5^k}$, so the arithmetic progression cannot jump over any of the intervals in $c_k$.

Let us assume for contradiction that $\{ a, a+d, a+2d, \dots, a+26d, a+27d \}$ is a $(14,14)$-AP. Since $a+13d$ is not blue and $a+14d$ is not red, the interval $[a+13d, a+14d]$ cannot be contained in a coloured interval. Therefore we can pick some $t \in [13,14]$ such that $a+td$ is in an uncoloured interval of $c_k$. Let this uncoloured interval be $(b, b+ \frac{1}{4 \cdot 5^k})$. Since $d > \frac{1}{4 \cdot 5^{k+1}}$, our arithmetic progression must contain points both before and after this uncoloured interval. Therefore, the uncoloured interval $(b, b+ \frac{1}{4 \cdot 5^k})$ is preceded by a red interval and succeeded by a blue interval. At least one of these two coloured intervals must have length exactly $\frac{1}{4 \cdot 5^k}$. Because the situation so far is symmetric, we may assume without loss of generality that the red interval has length $\frac{1}{4 \cdot 5^k}$. So we have the following situation, which is shown in Figure~\ref{aroundb}.
\begin{itemize}
\item $[b-\frac{3}{4 \cdot 5^k}, b-\frac{2}{4 \cdot 5^k}]$ is blue
\item $(b-\frac{2}{4 \cdot 5^k}, b-\frac{2}{4 \cdot 5^k})$ is uncoloured
\item $[b-\frac{1}{4 \cdot 5^k}, b]$ is red
\item $(b, b-\frac{2}{4 \cdot 5^k})$ is uncoloured
\item $[b+\frac{1}{4 \cdot 5^k}, b+\frac{2}{4 \cdot 5^k}]$ is blue
\end{itemize}
The first fourteen terms of the arithmetic progression are contained in the interval $(b- \frac{2}{4 \cdot 5^k}, b+ \frac{1}{4 \cdot 5^k})$.
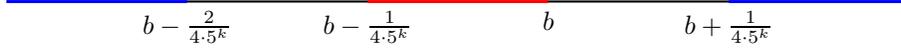
\begin{figure}
\begin{tikzpicture}[scale=2.4]
  \draw[thick] (0,1) -- (5,1)
  node[pos=0, below]    {}
  node[pos=0.2, below] {$b- \frac{2}{4 \cdot 5^k}$}
  node[pos=0.4, below]  {$b- \frac{1}{4 \cdot 5^k}$}
  node[pos=0.6, below]  {$b$}
  node[pos=0.8, below] {$b+ \frac{1}{4 \cdot 5^k}$}
  node[pos=1, below]    {};
  \draw[blue, very thick]  (0,1)  -- (1,1) ;
  \draw[red, very thick] (2,1) -- (3,1);
  \draw[blue, very thick]  (4,1)  -- (5,1) ;
\end{tikzpicture}
\caption{The configuration $c_k$ around the area of interest}
\label{aroundb}
\end{figure}
\begin{figure}
\begin{tikzpicture}[scale=2.4]
  \draw[thick] (0,1) -- (5,1)
  node[pos=0, below]    {}
  node[pos=0.2, below] {$b- \frac{2}{4 \cdot 5^k}$}
  node[pos=0.4, below]  {$b- \frac{1}{4 \cdot 5^k}$}
  node[pos=0.6, below]  {$b$}
  node[pos=0.8, below] {$b+ \frac{1}{4 \cdot 5^k}$}
  node[pos=1, below]    {};
  \draw[blue, very thick]  (1.6,1)  -- (1.8,1) ;
  \draw[red, very thick] (1.2,1) -- (1.4,1);
  \draw[blue, very thick]  (3.2,1)  -- (3.4,1) ;
  \draw[red, very thick] (3.6,1) -- (3.8,1);
  \draw[blue, very thick]  (0,1)  -- (1,1) ;
  \draw[red, very thick] (2,1) -- (3,1);
  \draw[blue, very thick]  (4,1)  -- (5,1) ;
  \draw (1, 0.95) -- (1, 1.05);
  \draw (1.2, 0.95) -- (1.2, 1.05);
  \draw (1.4, 0.95) -- (1.4, 1.05);
  \draw (1.6, 0.95) -- (1.6, 1.05);
  \draw (1.8, 0.95) -- (1.8, 1.05);
  \draw (2, 0.95) -- (2, 1.05);
  \draw (2.2, 0.95) -- (2.2, 1.05);
  \draw (2.4, 0.95) -- (2.4, 1.05);
  \draw (2.6, 0.95) -- (2.6, 1.05);
  \draw (2.8, 0.95) -- (2.8, 1.05);
  \draw (3, 0.95) -- (3, 1.05);
  \draw (3.2, 0.95) -- (3.2, 1.05);
  \draw (3.4, 0.95) -- (3.4, 1.05);
  \draw (3.6, 0.95) -- (3.6, 1.05);
  \draw (3.8, 0.95) -- (3.8, 1.05);
  \draw (4, 0.95) -- (4, 1.05);
\end{tikzpicture}
\caption{The configuration $c_{k+1}$ around the area of interest, and the partition of $(b- \frac{2}{4 \cdot 5^k}, b+ \frac{1}{4 \cdot 5^k})$ into fifteen intervals}
\label{aroundb2}
\end{figure}
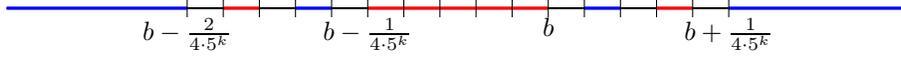
Now we consider the colouring given by $c_{k+1}$. This is shown in Figure~\ref{aroundb2}. The interval $(b- \frac{2}{4 \cdot 5^k}, b+ \frac{1}{4 \cdot 5^k})$ is partitioned into fifteen intervals of width $\frac{1}{4 \cdot 5^{k+1}}$, two of which are coloured blue. Since $d > \frac{1}{4 \cdot 5^{k+1}}$, each of these intervals contains at most one of $\{a, a+d, \dots, a+12d, a+13d \}$. This implies at least least one of the two blue intervals contains a point of $\{a, a+d, \dots, a+12d, a+13d \}$, because you can't fit 14 pigeons into 13 holes without putting two in the same hole. But none of these points are allowed to be blue, a contradiction. This concludes case (1).

For cases (2)-(4), we need the notion of a ladder. We define a ladder to be a finite non-decreasing sequence of real numbers, such that the difference between consecutive terms is at most 1/4, and the difference between the first and last terms is 1. These are useful because, since $[1/2, 3/4]$ is blue in $c_0$, any ladder must contain a blue point of $c_0$.

\textbf{Case (2)} If $d \in [\frac{1}{4}, \frac{5}{16}]$, then $4d-1, 1-3d \in [0, 1/4]$, so 
\[ a, a+4d-1, a+d, a+5d-1, a+2d, a+6d-1, a+3d, a+1 \]
is a ladder for any $a$. $c_0$ cannot contain a $(7,0)$-AP with first term $a$ and $d \in [\frac{1}{4}, \frac{5}{16}]$, because if it did, the above ladder would contain no blue points.

\textbf{Case (3)} If $d \in [\frac{5}{16}, \frac{7}{20}]$, then $5d-3/2, 3/2-4d, 1/2-d, 2d-1/2 \in [0, 1/4]$, so 
\[ a, a+5d-\frac{3}{2}, a+d, a+3d-\frac{1}{2}, a+2d, a+4d-\frac{1}{2}, a+1 \]
is a ladder for any $a$. $c_0$ cannot contain a $(3,3)$-AP with first term $a$ and $d \in [\frac{5}{16}, \frac{7}{20}]$, because if it did, the above ladder would contain no blue points. (Note that $a+3d, a+4d, a+5d$ not being red is the same as $a+3d-\frac{1}{2}, a+4d-\frac{1}{2}$ and $a+5d-\frac{1}{2}$ not being blue, because of the symmetry).

\textbf{Case (4)} If $d \in [\frac{7}{20}, \frac{2}{5}]$, then $3d-1, 2-5d \in [0, 1/4]$, so 
\[ a, a+3d-1, a+6d-2, a+d, a+4d-1, a+7d-2, a+2d, a+5d-1, a+1 \]
is a ladder for any $a$. $c_0$ cannot contain a $(8,0)$-AP with first term $a$ and $d \in [\frac{7}{20}, \frac{2}{5}]$, because if it did, the above ladder would contain no blue points.

\textbf{Case (5)} Let $k \ge 0$ be an integer. We claim $c_k$ does not contain a $(13,0)$-AP with $d \in [\frac{1}{2}-\frac{1}{8 \cdot 5^k}, \frac{1}{2}-\frac{1}{8 \cdot 5^{k+1}})$.
Suppose for contradiction $\{a, a+d, \dots, a+11d, a+12d\}$ is a $(13,0)$-AP

Let $d' = 1/2 - d$ and let $a' = a+12d$. For any integer $t$ with $0 \le t \le 12$, $a' + td' = a + (12-t)d + t/2$. Since $a+ (12-t)d$ is not blue, $a'+td'$ is not blue if $t$ is even and not red if $t$ is odd. We also have that $d' \in (\frac{1}{8 \cdot 5^{k+1}}, \frac{1}{8 \cdot 5^k}]$. Recall $c_k$ consists of the following repeating pattern:
\begin{itemize}
\item a blue interval of length at least $\frac{1}{4 \cdot 5^k}$
\item an uncoloured interval of length $\frac{1}{4 \cdot 5^k}$
\item a red interval of length at least $\frac{1}{4 \cdot 5^k}$
\item an uncoloured interval of length $\frac{1}{4 \cdot 5^k}$
\end{itemize}
Two consecutive points in the arithmetic progression $\{a', a'+d', \dots, a'+11d', a'+12d' \}$ cannot lie in the same coloured interval of $c_k$, since one of these two points is not allowed to be red, and the other is not allowed to be blue. Suppose $a'+kd'$ lies in some coloured interval. If $k \neq 0, 12$, then either $a' + (k-1)d'$ or $a'+(k+1)d'$ must lie in the same coloured interval, giving a contradiction, because the coloured interval has length at least $\frac{1}{4 \cdot 5^k}$ and $d' \le \frac{1}{8 \cdot 5^k}$.

Therefore all of $a'+d', a'+2d', \dots, a'+11d'$ must be in uncoloured intervals. Like in case (1), the common difference is too small to jump over the coloured intervals, so these 11 points must all lie in the same uncoloured interval. But $a'+11d'-(a'+d') = 10d' > \frac{1}{4 \cdot 5^k}$ so $a'+11d'$ and $a'+d'$ are too far from each other to lie in the same uncoloured interval. We have a contradiction. 

This concludes the proof of Theorem~\ref{cinfty}. 
\end{proof}

So we now know that our colouring $c'_\infty$ of $\R/\Z$ does not contain a $(14,14)$-AP, and therefore the same must also be true of the induced colourings of $\Z/p\Z$.

Let $r_p$ and $b_p$ be the proportions of red and blue points respectively, in the colouring on $\Z/p\Z$. The red and blue sets in $c_\infty$ are both countable unions of intervals of total measure 1/2. Since the proportion of points in $\Z/p\Z$ above some interval of length $\ell$ in $\R/\Z$ is $\ell + O(p^{-1})$, this implies that as $p \to \infty$, $\liminf r_p \ge 1/2$ and $\liminf b_p \ge 1/2$. As $r_p + b_p = 1$, This implies $r_p$ and $b_p$ must both tend to $1/2$. So $(14,14)$ is not findable, even for $\delta$ arbitrarily close to 1/2.

\section{Construction for $(3, 30000)$}\label{3vs30000}
\subsection{Fractal construction}
We will use the same kind of construction as in Section~\ref{fc1}, except that the fractal will have a more complicated structure. We aim to construct a colouring $c'_\infty$ of $\R/\Z$ which has no $(3, 30000)$-AP, as this will induce suitable colourings of $\Z/p\Z$. We start by defining an infinite series of configurations $c_0, c_1, c_2, \dots$, each one obtained from the previous one by certain replacement rules. Since the fractal involved is more complicated, we need to use two kinds of uncoloured intervals, each with their own replacement rule.

For the purposes of this section, we define a configuration as follows.
A \emph{configuration} is a partial colouring of $\R/\Z$ together with a partition of the uncoloured subset into intervals, and an assignment of a type and a direction to each of these intervals. Each interval can either be assigned Type I or Type II, independently of its direction, and the directions work in the same way as before. Let $c_0$ be the following configuration:

\begin{itemize}
\item $(0, 1/6)$ Type I uncoloured interval directed backwards
\item $[1/6, 1/3]$ coloured blue
\item $(1/3, 1/2)$ Type I uncoloured interval directed forwards
\item $[1/2,1]$ coloured blue.
\end{itemize}

This is shown in Figure~\ref{c02}. The replacement rules are as follows. Let $(a,b)$ be one of the uncoloured intervals, and $\epsilon$ be its direction. Let $f:[0,1] \to [a,b]$ be given by
\[ f : x \mapsto \begin{cases}
    a + (b-a)x  & \quad \text{if } \epsilon = +1 \\
    b - (b-a)x  & \quad \text{if } \epsilon = -1 
\end{cases} \]
If $(a,b)$ is of Type I, we replace $(a,b) = f((0,1))$ with the following:

\begin{itemize}
\item $f((0, \frac{1}{288}))$ Type II uncoloured, direction $-\epsilon$
\item $f([\frac{1}{288}, \frac{7}{288}])$ coloured red
\item $f((\frac{7}{288}, \frac{1}{36}))$ Type II uncoloured, direction $\epsilon$
\item $f([\frac{1}{36}, \frac{1}{3}])$ coloured blue
\item $f((\frac{1}{3}, 1))$ Type I uncoloured, direction $\epsilon$.
\end{itemize}
Note that the values $\frac{1}{288}$ and $\frac{7}{288}$ are chosen so that the middle $\frac{3}{4}$ of the first $\frac{1}{36}$ of the interval is red.

If $(a,b)$ is of Type II, we replace $(a,b) = f((0,1))$ with the following:

\begin{itemize}
\item $f((0,\frac{1}{101}))$ Type II uncoloured, direction $\epsilon$
\item $f([\frac{1}{101}, \frac{51}{101}])$ coloured blue
\item $f((\frac{51}{101}, 1))$ Type I uncoloured, direction $\epsilon$
\end{itemize}
This is shown in Figure~\ref{replacement2}.
\begin{figure}
\begin{tikzpicture}[scale=2]
\draw[thick] (0,0) -- (6,0)
   node[pos=0, below]   {0}
   node[pos=1/6, below]   {1/6}
   node[pos=1/3, below]   {1/3}
   node[pos=1/2, below]   {1/2}
   node[pos=1, below]   {1};
\draw[very thick, blue] (1, 0) -- (2, 0);
\draw[very thick, blue] (3, 0) -- (6, 0);
\draw[thick, ->] (1, 0) -- (0.5, 0);
\draw[thick, ->] (2, 0) -- (2.5, 0);
\end{tikzpicture}
\caption{The configuration $c_0$. Type I uncoloured intervals are shown in black.}
\label{c02}
\end{figure}
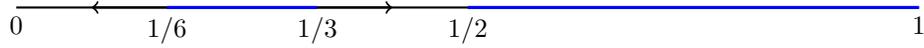
\begin{figure}
\begin{tikzpicture}[scale=1]
  \draw[thick] (0,0) -- (5,0)
  node[pos=0, below]    {0}
  node[pos=1, below]    {1};
  \draw[thick] (9,0) -- (12,0)
  node[pos=0, below]    {0}
  node[pos=0.16, below] {$\frac{1}{36}$}
  node[pos=0.4, below]  {$\frac{1}{3}$}
  node[pos=1, below]    {1};
  \draw[->, thick]  (0,0) -- (2.5,0);
  \draw[->, thick]  (9,0) -- (10.5,0);
  \draw[->, green]  (7.15,0) -- (7.03,0);
  \draw[->, green]  (7.65,0) -- (7.77,0);
  \draw[red, very thick]  (7.15,0)  -- (7.65,0) ;
  \draw[blue, very thick] (7.8,0) -- (9,0);
  \draw[green]      (7,0) -- (7.15, 0);
  \draw[green]      (7.65,0) -- (7.8,0);
  \draw[->] (5.8,0) -- (6,0) node[below] {Becomes} -- (6.2,0);
  \draw[green, thick] (5.4, -2) -- (9.4, -2)
  node[pos=0, black, below] {0}
  node[pos=0.125, black, below] {$\frac{1}{288}$}
  node[pos=0.875, black, below] {$\frac{7}{288}$}
  node[pos=1, black, below] {$\frac{1}{36}$};
  \draw[red, very thick]   (5.9, -2) -- (8.9, -2);
  \draw[->, green, thick]  (5.9, -2) -- (5.6, -2);
  \draw[->, green, thick]  (8.9, -2) -- (9.2, -2);
  \draw (5.4, -1.8) -- (6.85, -0.2);
  \draw (9.4, -1.8) -- (8, -0.2);
\end{tikzpicture}
\begin{tikzpicture}[scale=1]
  \draw[thick, green] (0,0) -- (5,0)
  node[pos=0, black, below]    {0}
  node[pos=1, black, below]    {1};
  \draw[thick] (7,0) -- (12,0)
  node[pos=0, below]    {0}
  node[pos=0.12, below] {$\frac{1}{101}$}
  node[pos=0.56, below]  {$\frac{51}{101}$}
  node[pos=1, below]    {1};
  \draw[->, thick, green]  (0,0) -- (2.5,0);
  \draw[->, thick,green]  (7,0) -- (7.3,0);
  \draw[->, thick]  (9.8,0) -- (10.9,0);
  \draw[green, thick]  (7,0)  -- (7.6,0) ;
  \draw[blue, very thick] (7.6,0) -- (9.8,0);
  \draw[->] (5.8,0) -- (6,0) node[below] {Becomes} -- (6.2,0);
\end{tikzpicture}
\caption{The replacement rules. Type I uncoloured intervals are shown in black and Type II uncoloured intervals are shown in green. The drawings are not to scale. For the first replacement rule, we zoom in on the first $\frac{1}{36}$ of the interval to show detail. These are shown for the interval $(0,1)$. The rules for a general interval are obtained by applying the linear function $f$.}
\label{replacement2} 
\end{figure}
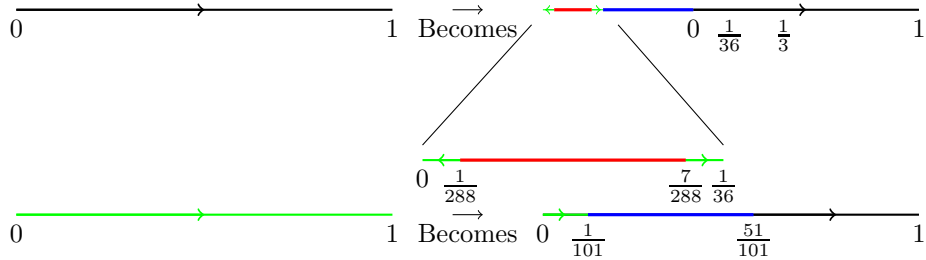
In the limit, we will get a partial colouring $c_\infty$ of $\R/\Z$. We then form our colouring $c'_\infty$ by arbitrarily colouring the uncoloured elements from $c_\infty$. 
\subsection{Proof there is no $(3,30000)$-AP}
As in the previous section, we will use the convention that, when dealing with a partial red-blue colouring, we define an $(a,b)$-AP to be an arithmetic progression of length $a+b$ such that none of the first $a$ elements are blue and none of the last $b$ elements are red. It suffices to prove that $c_\infty$ contains no non-trivial $(3,30000)$-AP under this definition.

First, we establish the following properties of the configurations $c_k$:
\begin{enumerate}
\item 
\begin{enumerate}
\item Every Type I uncoloured interval of length $\ell$ in $c_k$ is preceded by a blue interval of length at least $\frac{11}{24} \ell$
\item Every Type I uncoloured interval of length $\ell$ in $c_k$ is succeeded by a blue interval of length at least $3 \ell$
\end{enumerate}
\item 
\begin{enumerate}
\item Every Type II uncoloured interval of length $\ell$ in $c_k$ is preceded by a red interval of length at least $6 \ell$
\item Every Type II uncoloured interval of length $\ell$ in $c_k$ is succeeded by a blue interval of length at least $6 \ell$
\end{enumerate}
\item Behind a Type I interval of length $\ell$ in $c_k$, within a distance of $\frac{25}{24} \ell$, we can find a red interval of $c_{k+1}$ of length at least $\frac{\ell}{48}$
\item If a Type II interval of length $\ell$ in $c_k$ came from a Type I interval of $c_{k-1}$, then in front of that Type II interval, within a distance of $300 \ell$, we can find a red interval of $c_{k+1}$ of length at least $4 \ell$
\end{enumerate}
Note that the meanings of the terms ``preceded'', ``succeeded'', ``in front of'' and ``behind'' are determined by the direction of the uncoloured interval. We will prove all of these properties together, by induction on $k$.
\begin{proof}
We start of with the base case $k=0$. Since there are no Type II uncoloured intervals in $c_0$, we only need to check properties 1 and 3. $c_0$ has two uncoloured intervals of Type I, and each has length $\frac{1}{6}$. They are $(0,\frac{1}{6})$ directed backwards and $(\frac{1}{3}, \frac{1}{2})$ directed forwards. They are each preceded by a blue interval of length $\frac{1}{6}$ and succeeded by a blue interval of length $\frac{1}{2}$, so Property 1 holds.

Since $c_0$ is symmetric about $\frac{1}{4}$, it suffices to check Property 3 holds for $(\frac{1}{3}, \frac{1}{2})$. When we apply the replacement rule to $(0, \frac{1}{6})$, we get a red interval $(\frac{281}{1728}, \frac{287}{1728})$ of length $\frac{1}{288} = \frac{1}{48} \times \frac{1}{6}$. This interval is in $c_1$ and is within $\frac{25}{24} \times \frac{1}{6}$ of $\frac{1}{3}$ (as $\frac{1}{3} - \frac{25}{24} \times \frac{1}{6} = \frac{23}{144} = \frac{276}{1728}$).

Now for the induction step. For $k>0$, every uncoloured interval of $c_k$ must arise from applying a replacement rule to an uncoloured interval of $c_{k-1}$, so we work case by case.

\underline{Property 1}

First case: an uncoloured interval of Type I that came from an uncoloured interval of Type I and length $\ell_0$ in $c_{k-1}$. The interval has length $\ell = \frac{2}{3} \ell_0$, and is preceded by a blue interval of length $\frac{11}{36} \ell_0 = \frac{11}{24} \ell$. It is succeeded by a blue interval of length at least $3 \ell_0 > 3 \ell$ (using Property 1(b) for $c_{k-1}$), so Property 1 holds.

Second case: an uncoloured interval of Type I that came from an uncoloured interval of Type II and length $\ell_0$ in $c_{k-1}$. The interval has length $\ell = \frac{50}{101} \ell_0$, and is preceded by a blue interval of length $\frac{50}{101} \ell_0 = \ell > \frac{11}{24} \ell$. It is succeeded by a blue interval of length at least $6 \ell_0 > 3 \ell$ (using Property 2(b) for $c_{k-1}$), so Property 1 holds.

\underline{Property 2}

First case: an uncoloured interval of Type II that came from an uncoloured interval of Type I and length $\ell_0$ in $c_{k-1}$. The interval has length $\ell = \frac{1}{288} \ell_0$, and is preceded by a red interval of length $\frac{1}{48} \ell_0 = 6 \ell$. We have two subcases, either it is ``pointing inwards'', in which case it is succeeded by a blue interval of length $\frac{11}{36} \ell_0 > 6 \ell$, or it is ``pointing outwards'' in which case it is succeeded by a blue interval of length at least $\frac{11}{24} \ell_0 > 6 \ell$ (using Property 1(a) for $c_{k-1}$). So Property 2 holds.

Second case: an uncoloured interval of Type II that came from an uncoloured interval of Type II and length $\ell_0$ in $c_{k-1}$. The interval has length $\ell = \frac{1}{101} \ell_0$, and is preceded by a red interval of length at least $6 \ell_0 > 6 \ell$ (using Property 2(a) for $c_{k-1}$). It is succeeded by a blue interval of length $\frac{50}{101} \ell_0 > 6 \ell$, so Property 2 holds.

\underline{Property 3}

First case: an uncoloured interval of Type I that came from an uncoloured interval of Type I and length $\ell_0$ in $c_{k-1}$. The interval has length $\ell = \frac{2}{3} \ell_0$, and we have a red interval of length $\frac{1}{48} \ell_0 = \frac{1}{32} \ell$ also arising from the same Type I uncoloured interval of $c_{k-1}$. It is within $\frac{1}{3}\ell_0 = \frac{1}{2}\ell$ of the rear end of our interval, so Property 3 holds.

Second case: an uncoloured interval of Type I that came from an uncoloured interval of Type II and length $\ell_0$ in $c_{k-1}$. Let $(m, m+\ell_0)$ be this Type II interval of $c_{k-1}$, which we may assume without loss of generality is directed forwards. Our Type I interval is $(m+\frac{51}{101} \ell_0, m+\ell_0)$. It has length $\ell = \frac{50}{101} \ell_0$. By aplying property 2(a) in $c_{k-1}$, we can see that $[m-\frac{1}{96} \ell_0, m]$ is red. This has length $\frac{1}{96} \ell_0 = \frac{101}{4800} \ell > \frac{1}{48} \ell$, and is within $(\frac{51}{101}+\frac{1}{96})\ell_0 = (\frac{51}{50}+\frac{101}{4800})\ell = \frac{4997}{4800} \ell < \frac{25}{24} \ell$, so Property 3 holds.

\underline{Property 4}

Consider an uncoloured interval of Type II that came from an uncoloured interval of Type I and length $\ell_0$ in $c_{k-1}$.  The interval has length $\ell = \frac{1}{288} \ell_0$. Here we have two subcases, the ``pointing outwards'' and ``pointing inwards'' cases. In the ``pointing outwards'' case, we use property 3 in $c_{k-1}$ to deduce that there is a red interval of length at least $\frac{1}{48} \ell_0 = 6 \ell$ within $\frac{25}{24} \ell_0 = 300 \ell$. In the ``pointing inwards'' case, we note that there is a Type I interval of length $\frac{2}{3} \ell_0$ that came from the same Type I interval of $c_{k-1}$. In $c_{k+1}$, this will give us a red interval of length $\frac{2}{3} \times \frac{1}{48} \ell_0 = \frac{1}{72} \ell_0 = 4 \ell$. It is within a distance of $\ell_0 = 288 \ell$ in front of our Type II interval, so Property 4 holds.

This completes the induction step and concludes our proof.
\end{proof}

Now we can proceed with the proof that $c_\infty$ contains no non-trivial $(3,30000)$-AP. Suppose, for a contradiction, that it does contain one, and it has first term $a$ and common difference $d \neq 0$. Because the initial configuration $c_0$ is symmetric (reflection in $1/4$), it follows that $c_i$ is also symmetric for all $i \le \infty$. Therefore we may assume without loss of generality that $d \in (0, 1/2]$. Also, the $(3,30000)$-AP is a $(3,30000)$-AP in $c_i$ for all $i$, because the partial colouring in each $c_i$ is a subcolouring of $c_\infty$.

We start off by considering just the first three points of the arithmetic progression, and where they might lie. None of these three points are allowed to be blue.

Firstly, all three of them must lie in the same Type I uncoloured interval of $c_0$. This is because the not-blue set in $c_0$ is $(0,1/6) \cup (1/3,1/2)$, and the only way you can have an arithmetic progression of length three in $\R/\Z$ with all three points lying in $(0,1/6) \cup (1/3,1/2)$ is to either have all three of them lying in $(0,1/6)$ or all three lying in $(1/2, 1/3)$.

So let $k$ be maximal such that $c_k$ contains a Type I uncoloured interval which contains all three points. Call this interval $I_1$, and say it starts at $s$ and ends at $s+t$ ($t$ will be negative if $I_1$ is directed backwards). Let the first $\frac{1}{36}$ of $I_1$ be $J_1$, and the last $\frac{2}{3}$ be $I_2$. In $c_{k+1}$, $I_2$ is a Type I uncoloured interval, directed in the same direction as $I_1$, and $J_1 \cup I_2$ is the subset of $I_1$ which isn't blue. We similarly define $J_2$ to be the first $\frac{1}{36}$ of $I_2$, $I_3$ to be the last $\frac{2}{3}$ of $I_2$, $J_3$ to be the first $\frac{1}{36}$ of $I_3$ and $I_4$ to be the last $\frac{2}{3}$ of $I_3$. $J_1 \cup J_2 \cup I_3$ contains all the not blue points of $c_{k+2}$ from $I_1$, while $J_1 \cup J_2 \cup J_3 \cup I_4$ contains all the not blue points of $c_{k+3}$ from $I_1$. So all three points must lie in $J_1 \cup J_2 \cup J_3 \cup I_4$. This is all shown in Figure~\ref{IsandJs}. The three terms of the arithmetic progression can't all lie in $I_2$ by the maximality of $k$, so at least one must lie in $J_1$. We claim that in fact all three of them lie in $J_1$.

\begin{figure}
\begin{tikzpicture}[scale=0.95]
\draw (0,3) -- (12, 3)
  node[pos = 0, left] {$c_k$}
  node[pos = 0.5, below] {$I_1$}
  node[pos = 0, above] {$s$}
  node[pos = 1, above] {$s+t$};
\draw[->] (0,3) -- (6,3);

\draw (0,2) -- (1.5,2)
  node[pos = 0, left] {$c_{k+1}$}
  node[pos = 0.5, below] {$J_1$}
  node[pos = 1, above] {$s+\frac{1}{36}t$};
\draw[thick, blue] (1.5,2) -- (4,2);
\draw (4,2) -- (12,2)
  node[pos = 0.5, below] {$I_2$}
  node[pos = 0, above] {$s+\frac{1}{3}t$};
\draw[->] (4,2) -- (8,2);

\draw (0,1) -- (1.5,1)
  node[pos = 0, left] {$c_{k+2}$}
  node[pos = 0.5, below] {$J_1$};
\draw[thick, blue] (1.5,1) -- (4,1);
\draw (4,1) -- (5,1)
  node[pos = 0.5, below] {$J_2$}
  node[pos = 1, above] {$s+\frac{19}{54}t$};
\draw[thick, blue] (5,1) -- (7,1);
\draw (7,1) -- (12,1)
  node[pos = 0.5, below] {$I_3$}
  node[pos = 0, above] {$s+\frac{5}{9}t$};
\draw[->] (7,1) -- (9.5,1);

\draw (0,0) -- (1.5,0)
  node[pos = 0, left] {$c_{k+3}$}
  node[pos = 0.5, below] {$J_1$};
\draw[thick, blue] (1.5,0) -- (4,0);
\draw (4,0) -- (5,0)
  node[pos = 0.5, below] {$J_2$};
\draw[thick, blue] (5,0) -- (7,0);
\draw (7,0) -- (7.7,0)
  node[pos = 0.5, below] {$J_3$}
  node[pos = 1, above] {$s+\frac{46}{81}t$};
\draw[thick, blue] (7.7,0) -- (9,0);
\draw (9,0) -- (12,0)
  node[pos = 0.5, below] {$I_4$}
  node[pos = 0, above] {$s+\frac{19}{27}t$};
\draw[->] (9,0) -- (10.5,0);

\draw[dashed] (0,0) -- (0,3);
\draw[dashed] (12,0) -- (12,3);
\draw[dashed] (1.5,0) -- (1.5,2);
\draw[dashed] (4,0) -- (4,2);
\draw[dashed] (5,0) -- (5,1);
\draw[dashed] (7,0) -- (7,1);
\draw (7.7,0) -- (7.7,0.15);
\draw (9,0) -- (9,0.15);
\end{tikzpicture}
\caption{Configurations $c_k$ through $c_{k+3}$, between $s$ and $s+t$. The intervals $I_i$ are Type I uncoloured in each case, while the $J_i$ contain a variety of intervals. We argue that if the first three points are all in $I_1$ but not all in $I_2$ then they are all in $J_1$.}
\label{IsandJs}
\end{figure}
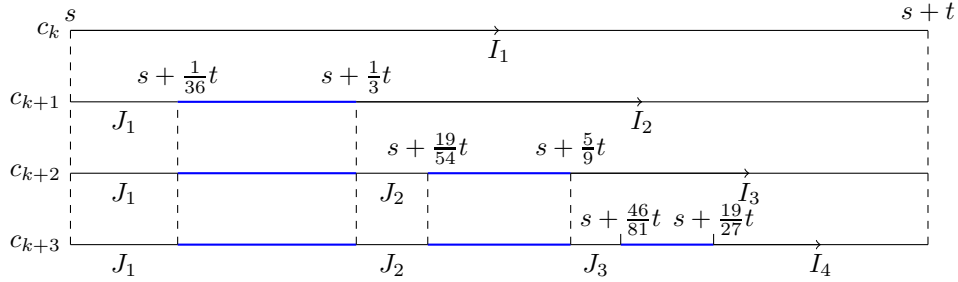

Say the three points are $p_1 = s+u_1t, p_2 = s+u_2t$ and $ p_3 = s+u_3t$, with $u_1, u_2, u_3 \in (0,1)$ in increasing order and $2u_2 = u_1+u_3$.  $p_1$ must lie in $J_1$, so $u_1 \in (0,\frac{1}{36})$ . If $p_2 \in J_1$ as well, then we must have $p_3 \in J_1$ since the gap between $J_1$ and $I_2$ is wider than $J_1$ is. Then all three points lie in $J_1$ as required.

There are two other options: $p_2$ can lie in $J_2$ or $I_3$. $I_3$ Goes form $s+\frac{5}{9}t$ to $s+t$, so in the latter case $u_2 > \frac{5}{9}$ and $u_1 < \frac{1}{36}$ so $u_3 > \frac{10}{9} - \frac{1}{36} > 1$ which can't happen. $J_2$ goes from $s+\frac{1}{3}t$ to 
$s+\frac{19}{54}t$, so if $p_2$ lies in $J_2$, then $u_3 \in (\frac{2}{3}-\frac{1}{36}, \frac{19}{27})= (\frac{23}{36}, \frac{19}{27})$. However, $J_3$ ends at $s + \frac{46}{81}t$ and we have $\frac{46}{81} < \frac{23}{36}$ while $I_4$ starts at $s+\frac{19}{27}t$, which shows that in this case the point $p_3$ will lie in between $J_3$ and $I_4$, which is not allowed.

So all three points lie in $J_1$. In $c_{k+1}$, the middle $\frac{3}{4}$ of $J_1$ is red, while the first $\frac{1}{8}$ and last $\frac{1}{8}$ are Type II uncoloured intervals. Let $J'_1$ be the middle $\frac{3}{4}$ of $J_1$. We claim that at least one of the three points lies in $J_1$.

Suppose this is not the case. One of the Type II uncoloured intervals must contain all three points, because the arithmetic progression cannot jump over $J'_1$. Let $m$ be maximal such that all three points lie in the same Type II uncoloured interval of $c_m$. Let $K_1$ be this Type II interval. We have $m \ge k+1$. In $c_{m+1}$, we cannot have all three points  lying in the same Type II interval, or in the same Type I interval, by the maximality of $k$ and $m$. In $c_{m+1}$, $K_1$ is replaced by a Type II uncoloured interval, followed by a blue interval, followed by a type I uncoloured interval, as shown in Figure~\ref{replacement2}. The three points are not allowed to be blue. If two of the three points lie in the Type II uncoloured interval of $c_{m+1}$, then the third one cannot lie in the Type I uncoloured interval of $c_{m+1}$, and vice versa, because the gap is too big. But we also cannot have all three points lying in the Type I interval, or all three in the Type II interval. This has exhausted all the possibilities, and so we have reached a contradiction. At least one of the three points must lie in $J'_1$.

Let $r$ be the length of the interval $J_1$. Since the first three points of our $(3,30000)$-AP are contained in $J_1$, we have that $d \le \frac{r}{2}$. To conclude our proof, we will apply the following lemma a couple of times.

\begin{lemma}\label{structureoftheargument}
Suppose we know that $d \le d_{\max} $ for some $d_{\max}$, and that in front of the interval $J'_1$ in the positive direction there is at some distance a blue interval $L_1$ of length at least $d_{\max}$, and at some further distance there is a red interval $L_2$ of length at least $d_{\max}$. Let $\ell$ be the distance from the end of $J'_1$ to the start of $L_2$. Then we have
\[ d \le \frac{\ell}{29999} \]
\end{lemma}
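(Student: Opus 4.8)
The plan is to derive the inequality from a single ``blocking'' argument carried out in the universal cover $\R$. Write the purported $(3,30000)$-AP as $p_0,p_1,\dots,p_{30002}$ with common difference $d$, and recall what we have already established: $p_0,p_1,p_2$ are not blue, $p_3,\dots,p_{30002}$ are not red, some $p_i$ with $i\le 2$ lies in $J'_1$ (which is coloured red in $c_{k+1}$), and $d\le r/2$ where $r=|J_1|$. Since every Type I interval of every $c_k$ has length at most $\tfrac16$, we have $r\le\tfrac1{216}$, so the terms $p_0,p_1,p_2,p_3$ all lie within $3d\le\tfrac32 r\ll 1$ of one another; I would therefore lift the progression to the increasing sequence $\tilde p_n=\tilde p_0+nd$ in $\R$ and fix the lift of $J'_1$ containing $\tilde p_i$ together with the lifts of $L_1$ and $L_2$ directly in front of it (in the positive direction), so that in $\R$ the three intervals occur in the order $J'_1,L_1,L_2$ with $\ell=(\text{start of }L_2)-(\text{end of }J'_1)$.

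I would then prove two bounds on where the tail $\tilde p_3,\dots,\tilde p_{30002}$ can sit. For the lower bound: $\tilde p_3=\tilde p_i+(3-i)d>\tilde p_i\ge\text{start of }J'_1$, while $p_3$ is not red and $J'_1$ is red in $c_{k+1}$, so $\tilde p_3\notin J'_1$; as $\tilde p_3$ is far too close to $\tilde p_i$ to have reached any other lift of $J'_1$, it must lie to the right of $J'_1$, i.e.\ $\tilde p_3\ge\text{end of }J'_1$. For the upper bound I would first show that \emph{no} term of the progression lies in this lift of $L_2$: if $\tilde p_n\in L_2$ then, since $\text{start of }L_2>\text{end of }J'_1\ge\tilde p_i$, we get $n>i$, and the interval $L_1$ lies strictly between $\tilde p_i$ and $\tilde p_n$; because $|L_1|\ge d_{\max}\ge d$ the progression cannot jump over $L_1$, so some $\tilde p_m$ with $i<m<n$ lies in $L_1$. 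If $n\ge 3$ this is impossible since $p_n$ is not red while $L_2$ is red; if $n\le 2$ then $m\le 1$, which is impossible since $p_m$ is not blue while $L_1$ is blue. Hence no term lies in $L_2$; and since also $\tilde p_i<\text{start of }L_2$ with $|L_2|\ge d$, the progression cannot jump over $L_2$ either, so every term — in particular $\tilde p_{30002}$ — lies strictly to the left of $\text{start of }L_2$. Combining the two bounds, $29999\,d=\tilde p_{30002}-\tilde p_3<(\text{start of }L_2)-(\text{end of }J'_1)=\ell$, which is exactly the assertion.

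The main obstacle — really the only step with content — is the claim that the progression never reaches $L_2$. The mechanism is that the blue interval $L_1$ between $J'_1$ and $L_2$ acts as a barrier the progression cannot leap over, so any term landing in $L_2$ forces an earlier term into $L_1$, and then the short case check on indices (is the offending $L_2$-term in the first block of three or not?) gives a contradiction each way, using only that the first three terms avoid blue and the remaining $30000$ avoid red. Everything else is interval arithmetic; the hypothesis $d\le d_{\max}$ together with $|L_1|,|L_2|\ge d_{\max}$ is used precisely to license the two ``cannot jump over'' assertions, and one should check in passing that $J'_1,L_1,L_2$ genuinely lie in a single arc of $\R/\Z$ of length less than $1$, so that the universal-cover picture is unambiguous.
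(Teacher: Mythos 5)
Your proof is correct and follows the same strategy as the paper's: the blue interval $L_1$ acts as a barrier that the progression cannot jump, so landing in the red interval $L_2$ would force a forbidden red--blue--red pattern among the three buckets $J'_1, L_1, L_2$, and hence all $30000$ non-red terms are squeezed between the end of $J'_1$ and the start of $L_2$. You have simply made explicit the lift to $\R$, the endpoint inequalities, and the index case-check that the paper compresses into the single sentence ``a red point in $J'_1$, a blue point in $L_1$ and a red point in $L_2$, in that order \ldots cannot happen.''
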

\begin{proof}
If our $(3,30000)$-AP contains any points after the start of $L_2$, then it must contain at least one point of the interval $L_2$, and also at least one point of the interval $L_1$, because both of these intervals are too long to jump over. So the $(3,30000)$-AP contains a red point in $J'_1$, a blue point in $L_1$ and a red point in $L_2$, and they occur in that order. But this cannot happen. Therefore all of the points must occur before the start of $L_2$. Also, the last 30000 points aren't allowed to be red, so they must occur after the start of $J'_1$. Therefore we have 30000 points of the arithmetic progression occurring within an interval of length $\ell$, and the conclusion must hold.
\end{proof}

Since $d \le \frac{r}{2}$, we start off by applying Lemma~\ref{structureoftheargument} with $d_{\max} = \frac{r}{2}$. The last $\frac{1}{8}$ of $J_1$ is a Type II interval of $c_{k+1}$ that came from a Type I interval of $c_k$. It has length $\frac{r}{8}$. By applying Property~2(b), we know that it is followed by a blue interval of length at least $\frac{3r}{4}>d_{\max}$. By applying property 4, we also know that within a distance of $\frac{300r}{8}$, there is a red interval of length at least $\frac{r}{2} = d_{\max}$. We apply Lemma~\ref{structureoftheargument} using these intervals, and deduce that $d \le \frac{300 r}{8 \cdot 29999} \le \frac{r}{799}$.

We now claim that, for any integer $n\ge 0$, if $d \le \frac{r}{799(101)^n}$, then $d \le \frac{r}{799(101)^{n+1}}$. We do this by applying Lemma~\ref{structureoftheargument} again, using $d_{\max} = \frac{r}{799(101)^n}$. Let $b$ be the endpoint of $J'_1$. Start off by considering the Type II interval of $c_{k+1}$ which starts at $b$. When we pass to $c_{k+n+1}$, the first $\frac{1}{101^n}$ of this original interval will be a Type II interval oriented forwards. This interval is $(b, b+\frac{r}{8(101)^n}$). We then consider what happens in this interval in the configuration $c_{k+n+3}$ (i.e. after applying the replacement rules two more times). We will have, among other things:
\begin{itemize}
\item A blue interval $(b+\frac{r}{8(101)^{n+1}}, b+\frac{51r}{8(101)^{n+1}})$, whose length $\frac{50r}{8(101)^{n+1}}$ is greater than $d_{\max}$
\item A red interval $(b+\frac{51r}{8(101)^{n+1}}+ \frac{50r}{8 \cdot 288(101)^{n+1}}, b+\frac{51r}{8(101)^{n+1}}+\frac{350r}{8 \cdot 288(101)^{n+1}})$, whose length $\frac{50r}{8 \cdot 48(101)^{n+1}}$ is also greater than $d_{\max}$
\end{itemize}
Applying Lemma~\ref{structureoftheargument} with these intervals, we get that $d \le \frac{1}{29999}(\frac{51r}{8(101)^{n+1}}+ \frac{50r}{8 \cdot 288(101)^{n+1}}) \le \frac{52}{8\cdot 29999}(\frac{r}{101^{n+1}}) \le \frac{r}{799(101)^{n+1}}$ as required.

Since $d \le \frac{r}{799(101^{n})}$ implies $d \le \frac{r}{799(101)^{n+1}}$ for all integers $n \ge 0$, and we already know $d \le \frac{r}{799}$, we must have that $d = 0$ contradicting the assumption that the $(3,30000)$-AP was non-trivial. This concludes our proof that there is no $(3,30000)$-AP in $c'_\infty$.

We only need to check the densities now. Let $r_p$ and $b_p$ be the densities of red and blue points in the colourings of $\Z/p\Z$ induced from $c'_\infty$. One can show that in $c_\infty$, the red set is a countable union of intervals of measure $\frac{2}{95}$ and the blue set is a countable union of intervals of measure $\frac{93}{95}$. So, by the same reasoning as in the previous section, $r_p$ and $b_p$ tend to $\frac{2}{95}$ and $\frac{93}{95}$ respectively. Therefore, we contradict the findability of $(3,30000)$ for all $\delta < \frac{2}{95}$.

We end by remarking that, unlike our construction for $(14,14)$-APs, this construction does not have density close to $\frac{1}{2}$, so it is an open question whether there exists a $k$ such that $(3,k)$ is not findable in colourings of density close to $\frac{1}{2}$.

\emph{Acknowledgements.} I would like to thank Ben Green for some helpful comments.

\bibliographystyle{plain}
\bibliography{Bibliography}

\end{document}